\theoremstyle{plain}
\newtheorem{theorem}{Theorem}[section]
\newtheorem{lemma}[theorem]{Lemma}
\newtheorem{cor}[theorem]{Corollary}
\theoremstyle{definition}
\newtheorem{defn}[theorem]{Definition}
\newtheorem{remark}[theorem]{Remark}
\newtheorem{notation}[theorem]{Notation}
\newcommand{\norm}[2]{\left\| {#1}\right\| _{#2}}
\newcommand{\cl}[1]{\operatorname{cl}({#1})}
\newcommand{\I}{\mathcal I}
\newcommand{\C}{\mathbb C}
\newcommand{\N}{\mathbb N}
\newcommand{\T}{\mathbb T}
\newcommand{\tr}{\mathcal L}
\newcommand{\Hinf}[1]{H^\infty({#1})}
\newcommand{\corr}[1]{{\textcolor{black}{#1}}}
\definecolor{mygray}{gray}{0.6}
\begin{document}

\date{3 April 2020}

\title[Lagrange approximation of transfer operators associated with holomorphic data]
{Lagrange approximation of transfer operators associated with holomorphic data}

\author[O.F.~Bandtlow]{Oscar F.~Bandtlow}

\address{%
Oscar F.~Bandtlow\\
School of Mathematical Sciences\\
Queen Mary University of London\\
London E3 4NS\\
UK.
}
\email{o.bandtlow@qmul.ac.uk}

\author[J.~Slipantschuk]{Julia Slipantschuk}

\address{%
Julia Slipantschuk\\
School of Mathematical Sciences\\
Queen Mary University of London\\
London E3 4NS\\
UK.
}
\email{j.slipantschuk@qmul.ac.uk}

\subjclass[2010]{Primary: 37M25; Secondary: 37C30, 37E05, 37E10, 65D05}
\keywords{Transfer operators, Lagrange interpolation,
Lagrange--Chebyshev interpolation, collocation method,
Lyapunov exponents, random matrices, iterated function systems}

\begin{abstract}
We show that spectral data of transfer operators given by holomorphic data
can be approximated using an effective numerical scheme based on Lagrange
interpolation. In particular, we show that for one-dimensional
systems satisfying certain complex contraction properties,
spectral data of the approximants converge exponentially to the
spectral data of the transfer operator with the exponential
rate determined by the respective complex contraction
ratios of the underlying systems.
We demonstrate the effectiveness of this scheme by numerically computing eigenvalues of transfer operators arising from interval and circle maps, as well as Lyapunov exponents
of (positive) random matrix products and iterated function systems, based on examples taken from the literature.
\end{abstract}

\maketitle

\section{Introduction}

Transfer operators constitute powerful tools for analysing the behaviour of hyperbolic dynamical systems, as their spectral data
encode various dynamic and geometric quantities of interest,
such as invariant measures,
exponential mixing rates, zeta functions counting periodic orbits, resonances of
hyperbolic surfaces, escape rates, stationary probability measures of iterated function
systems, or Lyapunov exponents of random matrix products
(see \cite{Ruelle, BoyarskiGora, Baladi1, Bahsoun, Borthwick,Baladi2}
for some pointers to the vast literature on this subject).

Unfortunately, spectral data of transfer operators, which typically act on
infinite-dimensional spaces, is
rarely available explicitly (see, however, \cite{BJS, SBJAnosov, FGL}
for examples with explicit spectra).
As a result, one needs to construct a suitable
discretisation of the operator in the form of
a matrix and numerically compute the
corresponding  eigenvalues and eigenfunctions. A popular approach
is to use a projection-based method, also known as `finite section method' or
`Galerkin method' in various other contexts: given a bounded operator
$L \colon B\to B$ on a Banach space $B$, one
considers an approximation scheme determined
by a sequence of rank-$n$ projections
$(\Pi_n\colon B\to B)_{n\in\mathbb{N}}$
converging to the identity in a suitable sense, and then takes spectral data of
$\Pi_n L \Pi_n$, represented by an $n\times n$ matrix,
as an approximation to spectral data of $L$.
However, unless extra conditions are
imposed on $(\Pi_n)_{n\in \mathbb{N}}$ and $L$, spectral data of
$(\Pi_n L \Pi_n)_{n\in \mathbb{N}}$
need not converge to spectral data of $L$, see, for example,
\cite{Hansen}.

The most widely studied projection-based method for the
spectral approximation of transfer
operators is the Ulam method, originally proposed in \cite{Ulam}
as a means to compute
the fixed point of the Perron-Frobenius operator for expanding
interval maps, which yields the density of the
unique absolutely continuous invariant measure for the underlying map.
The Ulam method is based on
partitioning the phase space of the underlying dynamical system into $n$
disjoint sets and defining $\Pi_n$ to be the conditional
expectation with respect to this
partition, that is, $\Pi_n$ is the projection onto
the subspace spanned by functions
which are piecewise constant on this partition.
The convergence of the Ulam method for piecewise monotonic and
expanding interval maps was established in \cite{Li}. The rate of convergence
was proven to be $O(\log(n)/n)$ in \cite{Keller82,BK},
which in turn was shown to be optimal in \cite{BoseMurray},
even for systems with higher regularity.

There exists by now a considerable body of literature concerned
with the application of Ulam-type methods to approximate the leading eigenvalue
and eigenfunction as well as subleading eigenvalues of transfer operators,
including those arising from higher-dimensional expanding or hyperbolic maps, see
\cite{F97, BK, DJ, BKL, F07, BB, FG-TQ, GN, CF1} to name but a few.
For dynamical system with higher regularity,
the speed of convergence of
projection-based methods can be improved by choosing projections onto subspaces spanned by
functions of higher smoothness, see \cite{Liverani} for a discussion
of a general strategy
or \cite{BH} for an approach using wavelets.
For practical applications, however, these
methods are often less suitable,
as they typically involve numerical evaluation of integrals
as well as higher derivatives.

In this article, we shall study a spectral approximation scheme based
on interpolating
projections,
which, for transfer operators associated with holomorphic data,
yields exponential convergence of spectral data,
while remaining practically efficient.
In this scheme,
the projection operator $\Pi_n$ maps a function $f$ to the unique
(Laurent) polynomial of degree~$n$
that coincides with $f$ on a set of $n$ predefined
collocation points.
The resulting approximation scheme, variously known in other
contexts as Lagrange interpolation, spectral Galerkin or collocation method, is
easily implementable in practice, as the $n$-th approximant $\Pi_n\mathcal{L}\Pi_n$ of
the transfer operator $\mathcal{L}$ can be obtained from an $n\times n$
matrix of the form $\left ( (\mathcal{L}e_j)(x_i) \right )_{ij}$,
where $(e_j)_j$ is a suitable collection of (Laurent) polynomials and $(x_i)_i$ a
suitable collection of collocation points (both of which may depend on $n$).
This method has recently been applied
to transfer operators for expanding circle or interval maps
by Wormell \cite{Wormell}; see also \cite{Schottky}, where this method has been used
to effectively calculate resonances of Schottky surfaces using transfer operator methods. The analysis in \cite{Wormell} is
in fact based on a different projection scheme,
in which the rank-$n$ projection is chosen
to be the orthogonal projection
(with respect to a suitably weighted $L^2$-scalar product)
onto the first $n$ elements of a Fourier basis (for circle maps)
or a Chebyshev basis (for interval maps),
an idea that is already present in an earlier paper by
MacKernan and Basios~\cite{MacKernan}.
The main result of \cite{Wormell} shows that the
leading eigenfunction of the transfer
operator can be approximated exponentially fast
in bounded variation norm by the
leading eigenfunctions of the transfer operators
truncated using the above orthogonal
projection scheme.
The paper also provides an algorithm in which the $L^2$-inner
products arising in the matrices representing
the truncated transfer operator are
evaluated using Chebyshev--Gauss quadrature,
which effectively renders the described
algorithm into a Lagrange interpolation method,
where the resulting errors are controlled
using certain aliasing bounds.

By contrast, we directly investigate the (non-orthogonal)
Lagrange interpolation projection,
which allows us to obtain
uniform convergence of the approximation scheme on suitable Banach spaces
of holomorphic functions, which in turn yields
strong convergence results for all spectral data,
including eigenvalues as well as the corresponding generalised eigenfunctions and eigendistributions.
Moreover, we obtain explicit bounds on the exponential convergence
rate given by certain complex contraction ratios of the underlying system.
Whereas \cite{MacKernan, Wormell} are chiefly concerned
with Perron-Frobenius operators for expanding (Markov) maps,
our results apply to a more general class of
transfer operators associated to so-called
holomorphic map-weight systems (see, for example, \cite{BJ_LMS, BJ_Advances}).

We should also mention that there is a completely different approach for
approximating statistical properties of dynamical systems based on
an intimate relation between certain Fredholm determinants encoding eigenvalues of
transfer operators and periodic orbits of the underlying systems.
This method became popular with the papers \cite{AAC1,AAC2} and was
investigated rigorously for approximating invariant densities and metric entropy
of analytic expanding Markov maps \cite{JP00}, computing
Lyapunov exponents of random matrix products of positive matrices \cite{P2010,JM},
as well as other dynamical quantities and
invariants~\cite{JPAJM, JPAdvances, BanJP, JP4, JPV, CJ09}.
The convergence of these periodic orbit based algorithms is super-exponential
in the maximum period of periodic orbits used. However,
the number of periodic orbits grows exponentially with the period,
rendering their computation intractable for large periods.
Moreover, the expression involving periodic orbits used in these algorithms can become
numerically unstable for transfer operators with large weights, see the discussion in \cite{Schottky}.

The structure of the paper is as follows.
We begin with a brief introduction to the Lagrange interpolation problem on
the unit circle (Section \ref{sec:circle_interpolation}).
We then specialise to the case of equidistant interpolation, and,
in Section \ref{sec:equid}, prove a key lemma
for the error of approximation of the interpolation operator when
considered on Hardy spaces of bounded holomorphic functions
on certain annuli containing
the unit circle. In Section \ref{sec:lag_cheb}, we review the interpolation problem on the
interval $[-1,1]$ with the interpolation points chosen as zeros of Chebyshev polynomials,
and show that the corresponding Lagrange--Chebyshev interpolation operator
considered on the Hardy space of bounded holomorphic functions on Bernstein ellipses is isometrically isomorphic
to the equidistant interpolation operator from Section \ref{sec:equid}, yielding the same
bounds for the error of approximation. In Section \ref{sec:lag_cheb_gen} the results from
the previous section are extended to Hardy spaces of bounded holomorphic functions
on more general (confocal) ellipses.
In Section \ref{sec:trans_ellipse}, we study transfer operators associated to holomorphic
map-weight systems, and show (Theorem \ref{thm:ellipse}) that these can be
approximated exponentially fast in operator norm by finite-rank
Lagrange--Chebyshev approximants. An analogous result  for generalised transfer
operators arising from analytic expanding circle maps (Theorem~\ref{thm:circle}) is
presented in Section \ref{trans_annulus}. The resulting exponential convergence of
spectral data, obtained as a straightforward consequence of our main results,
Theorem~\ref{thm:ellipse} and Theorem~\ref{thm:circle},
are collected as Corollaries~\ref{cor:conv} and \ref{cor:conv2} in Section~\ref{sec:conv}.
Finally, in Section \ref{sec:applications}, we demonstrate
that a practical algorithm based on Lagrange--Chebyshev approximation can be used to
effectively compute spectral data of suitable transfer operators, by applying it to several
examples from the literature and comparing it to other approximation methods. These
include eigenvalues of transfer operators arising from interval or circle maps, as well as
Lyapunov exponents for random matrix products and  iterated functions systems.

\section{Lagrange interpolation}
\subsection{Lagrange interpolation on the unit circle}
\label{sec:circle_interpolation}
We start by recalling basic facts concerning the Lagrange interpolation
problem on the unit circle (see, for example, \cite{DG-V}). For a brief
overview over interpolation in general, see, for example, \cite[Chapter~4]{Rivlin}.

Let $\T=\{z\in \C: |z|=1\}$ be the unit circle, and let $f\colon \T\to \C$ be a continuous
function. For $n\in \mathbb{Z}$ let $e_n$ denote the Laurent monomial $e_n(z) = z^n$.
We are interested in approximating $f$ by Laurent polynomials, that is, finite linear
combinations of Laurent monomials.

For $N\in\N$, let $\Gamma = \{z_k \colon k=0,\ldots, N-1\}\subset \T$ be a set
of $N$ distinct complex numbers on the unit circle, and let
$N_l$ and $N_u$ be two
non-negative integers with $N_l + N_u = N-1$.
We shall refer to the points in $\Gamma$ as interpolation nodes, or simply nodes.
The Lagrange interpolation problem in the space of Laurent polynomials
$\Lambda_{-N_l, N_u}=
\text{span} \{e_n\colon -N_l \leq n \leq N_u\}$ amounts to determining
the unique Laurent polynomial
$q_N \in \Lambda_{-N_l, N_u}$
with
\[q_N(z_k) = f(z_k)\, \text{ for } k=0,\ldots,N-1.\]
It is not difficult to see that the interpolant $q_N$
can be written as a linear combination of Lagrange Laurent polynomials as follows
\begin{equation}\label{eq:lagrange_sum}
q_N(z) = \sum_{k=0}^{N-1} f(z_k) l_k(z).
\end{equation}
Here, $l_k$ is the unique Laurent polynomial in $\Lambda_{-N_l, N_u}$
satisfying $l_k(z_j) = \delta_{jk}$, which is given by
\[ l_k(z) = \frac{l(z) z_k^{N_l}}{l'(z_k)(z-z_k) z^{N_l}},\]
where $l(z) = \prod_{k=0}^{N-1} (z - z_k)$.
Note that \eqref{eq:lagrange_sum} defines a projection operator on the space
of continuous functions, denoted by $Q_N\colon C(\mathbb{T})\to C(\mathbb{T})$
with $q_N = Q_{N}f$.

As we are interested in applications to transfer operators arising from analytic maps, we
shall study the operator $Q_N$ when acting on functions analytic on $\T$, which
have analytic extensions to certain domains containing $\T$.
With slight abuse
of notation we continue to write $f$ for various extensions as well.
An important ingredient for the proofs presented in the following subsections
is the observation that the sum in \eqref{eq:lagrange_sum} can be rewritten as a contour
integral over such domains. More precisely, assuming that $f$ is holomorphic
on the closure of an annulus $A$ containing $\mathbb{T}$, the Residue
Theorem implies that

\begin{equation}\label{eq:fQNf}
f(z) - Q_Nf(z) = \frac{1}{2\pi i} \int_{\partial A}
\frac{f(\zeta) l(z) \zeta^{N_l}}{(\zeta-z)l(\zeta) z^{N_l}}\, d\zeta
\qquad(z\in A),
\end{equation}

\begin{equation}\label{eq:QNf}
Q_Nf(z) = \frac{1}{2\pi i} \int_{\partial A}
\frac{f(\zeta)\left(l(\zeta) z^{N_l} - l(z) \zeta^{N_l}\right)}{(\zeta-z)l(\zeta) z^{N_l}}\, d\zeta
\qquad (z\in A),
\end{equation}
where $\partial A$ denotes the positively oriented boundary of $A$.
In order to see this, first note that $\frac{z_k^{N_l}}{l'(z_k) (z-z_k)}$
is the residue of $\zeta \mapsto \frac{\zeta^{N_l}}{l(\zeta)(z - \zeta)}$
at the simple pole $\zeta=z_k$ for any $z\in A$ with $z\neq z_k$.
For any such $z$, the Residue Theorem allows us to replace the sum in
\eqref{eq:lagrange_sum} by a contour integral
\[ q_N(z) =
\frac{1}{2\pi i} \int_{\mathcal{C}}
\frac{f(\zeta) l(z) \zeta^{N_l}} {(\zeta-z)l(\zeta) z^{N_l}} \,d\zeta,\]
where $\mathcal{C}$ is a simple closed positively oriented contour in $A$ enclosing all
$z_k$, $k=0,\ldots, N-1$ but not $z$. Enlarging
the contour of integration and accounting for the residue at $z$,
Equation \eqref{eq:fQNf} follows, which in turn implies Equation \eqref{eq:QNf}.
The expression in \eqref{eq:QNf} is also known as the Hermite integral
formula (see, for example, \cite[Theorem 11.1]{Trefethen}).

\subsection{Equidistant interpolation on the unit circle}\label{sec:equid}
We shall now specialise the Lagrange interpolation problem to nodes
equally spaced on $\T$.
For this, we set $N=2 n$, fix $N_l = n$, $N_u = n-1$, and choose
the $2n$ equidistant nodes as roots of $-1$, that is,
\begin{equation}\label{eq:zk}
z_k = \exp \left ( \frac{2k+1}{2n} i\pi  \right )  \qquad (k=0,\ldots,2n-1),
\end{equation}
so that $l(z) = z^{2n}+1$.
We may now rewrite the Laurent polynomial $l_k$ in
\eqref{eq:lagrange_sum} as follows
\[l_k(z) =
\frac{z^n + z^{-n}}{2n z_k^{n-1}(z-z_k)} =
\frac{1}{2n} \sum_{l=-n}^{n-1}\left( \frac{z}{z_k} \right)^{l}.\]
Now, using the above expression for $l_k$ and rearranging the sum in
\eqref{eq:lagrange_sum} we obtain the following
representation of the equidistant
interpolation projection $Q_N = Q_{2n}$
\begin{align}\label{eq:Q2nf_practical}
(Q_{2n}f)(z)
&= \frac{1}{2n}\sum_{l=-n}^{n-1} c_{l,2n}(f) e_l(z) \qquad (z\in \T, f\in C(\T) ),
\end{align}
where the coefficient functionals
\[c_{l, 2n}(f) =\sum_{k=0}^{2n-1} f(z_k) z_k^{-l} \qquad (f\in C(\T))\]
turn out to be the discrete Fourier transform of the sequence
$f(z_0), f(z_1), \ldots, f(z_{2n-1})$. As a result, the expression
$\eqref{eq:Q2nf_practical}$ is particularly useful for numerical implementation.
In the case where $f$ is not only continuous but also holomorphic
on the closure of an annulus $A$ containing $\T$, the representations
\eqref{eq:fQNf} and \eqref{eq:QNf} take the following form

\begin{equation}\label{eq:fQ2nf}
f(z) - (Q_{2n} f)(z) = \frac{1}{2\pi i} \int_{\partial A}
\frac{f(\zeta) \sigma(z^n) }{(\zeta-z)\sigma(\zeta^n)}\,d\zeta
\qquad (z\in A),
\end{equation}

\begin{equation}\label{eq:Q2nf}
(Q_{2n} f)(z) = \frac{1}{2\pi i} \int_{\partial A}
\frac{f(\zeta)\left(\sigma(\zeta^n) - \sigma(z^n)\right)}{(\zeta-z)\sigma(\zeta^n)}\,d\zeta
\qquad (z \in A),
\end{equation}
where we have used $l(z) / z^n = (z^n + z^{-n}) =  2 \sigma(z^n)$ with
$\sigma$ denoting the Joukowski transform
\begin{equation}\label{eq:sigma}
\sigma(z) = \frac{1}{2}(z + z^{-1}).
\end{equation}

We shall now prove a simple lemma that will turn out to be the key estimate allowing us
to establish convergence of the Lagrange interpolation method for transfer operators
arising from expanding analytic circle maps.  In order to formulate it we require some
more notation.

For $\rho>1$ let $A_\rho = \{z\in \C \colon \rho^{-1} < |z| < \rho\}$
be an open annulus with radii $\rho$ and $\rho^{-1}$.
We write $H^\infty(A_\rho)$ for the Hardy space of bounded holomorphic functions on
$A_\rho$ which, equipped with the norm
$\norm{f}{\corr{H^\infty(A_\rho)}} = \text{sup}\{|f(z)|\colon z \in A_\rho\}$,
 is a Banach space.

Observe now that for $1<r<R$, we have $A_r\subset A_R$, so $H^\infty(A_R)$ can be
identified with a subspace of $H^\infty(A_r)$ via the canonical embedding
$J \colon H^\infty(A_R) \to H^\infty(A_r)$ given by $Jf=f|_{A_r}$. The following lemma shows
that $J$ is approximated at exponential speed by the equidistant Lagrange
interpolation projections.

\begin{lemma}\label{lem:J}
For $1<r<R$
let $J\colon H^\infty(A_R) \to H^\infty(A_r)$ denote the canonical
embedding and consider $Q_{2n}$ as an operator
from $H^\infty(A_R)$ to $H^\infty(A_r)$.
Then, for any $n\in \N$,
\begin{equation}\label{eq:normJQ2n}
  \norm{J-Q_{2n}}{H^\infty(A_R)\to H^\infty(A_r)} \leq
c_{r,R}\,\frac{\cosh(n\log(r))}{\sinh(n\log(R))},
\end{equation}

\begin{equation}\label{eq:normQ2n}
\norm{Q_{2n}}{H^\infty(A_R)\to H^\infty(A_r)}\leq
c_{r,R}\,\frac{\cosh(n \log(R)) + \cosh(n\log(r))}{\sinh(n\log(R))},
\end{equation}
where
\[ c_{r,R} = \frac{\sinh(\log{R})}{\cosh(\log(R)) - \cosh(\log(r))}. \]
\end{lemma}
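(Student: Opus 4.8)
The plan is to read off both bounds directly from the Hermite-type contour representations \eqref{eq:fQ2nf} and \eqref{eq:Q2nf}, estimating the respective integrands factor by factor and then letting the contour expand to the boundary of $A_R$. Fix $f\in H^\infty(A_R)$ with $\norm{f}{H^\infty(A_R)}\le 1$ and pick any $\rho$ with $r<\rho<R$, so that $f$ is holomorphic on $\overline{A_\rho}$ and $\sup_{\partial A_\rho}\abs{f}\le\norm{f}{H^\infty(A_R)}\le 1$. Applying \eqref{eq:fQ2nf} (resp. \eqref{eq:Q2nf}) with $A=A_\rho$ and passing to moduli, the task reduces to estimating, for $z\in A_r$, the three quantities appearing in the integrand: the numerator $\abs{\sigma(z^n)}$ (resp. $\abs{\sigma(\zeta^n)-\sigma(z^n)}$), the denominator $\abs{\sigma(\zeta^n)}$ on $\partial A_\rho$, and the Cauchy factor $\tfrac{1}{2\pi}\int_{\partial A_\rho}\abs{\zeta-z}^{-1}\abs{d\zeta}$.

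The first two are elementary consequences of $\sigma(w)=\tfrac12(w+w^{-1})$. Using $\abs{z}\in(r^{-1},r)$ gives $\abs{\sigma(z^n)}\le\tfrac12(r^n+r^{-n})=\cosh(n\log r)$, and on $\partial A_\rho$ the same computation yields $\abs{\sigma(\zeta^n)}\le\cosh(n\log\rho)$, while the reverse triangle inequality applied to $\zeta^n+\zeta^{-n}$ gives the matching lower bound $\abs{\sigma(\zeta^n)}\ge\tfrac12(\rho^n-\rho^{-n})=\sinh(n\log\rho)$. Thus the numerator of \eqref{eq:fQ2nf} contributes $\cosh(n\log r)$ and that of \eqref{eq:Q2nf} contributes $\cosh(n\log\rho)+\cosh(n\log r)$, while both denominators contribute the factor $1/\sinh(n\log\rho)$.

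The delicate point is the Cauchy factor, and here the key observation is that one must not bound $\abs{\zeta-z}^{-1}$ over all of $A_r$ directly: since $f-Q_{2n}f$ and $Q_{2n}f$ are both holomorphic on a neighbourhood of $\overline{A_r}$, the maximum modulus principle for the annulus lets me restrict attention to $\abs{z}=r$ and $\abs{z}=r^{-1}$. For $\abs{z}=r$ the two boundary circles of $A_\rho$ are then at distance at least $\rho-r$ and $r-\rho^{-1}$ from $z$, so $\abs{\zeta-z}^{-1}$ is bounded by $(\rho-r)^{-1}$ and $(r-\rho^{-1})^{-1}$ respectively; integrating against the arc lengths $2\pi\rho$ and $2\pi\rho^{-1}$ yields $\tfrac{\rho}{\rho-r}+\tfrac{\rho^{-1}}{r-\rho^{-1}}$, which a short algebraic simplification identifies with $c_{r,\rho}$. (The case $\abs{z}=r^{-1}$ gives the same value by the $z\mapsto 1/z$ symmetry of the configuration.) I expect this to be the main obstacle: the naive estimate over all $z\in A_r$ only produces the weaker constant $\tfrac{\rho+r}{\rho-r}$, and it is precisely the reduction to $\abs{z}=r$ via maximum modulus that sharpens the distance to the inner circle from $r^{-1}-\rho^{-1}$ to $r-\rho^{-1}$ and recovers the stated $c_{r,R}$.

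Assembling the three factors gives, for $\abs{z}\in\{r,r^{-1}\}$, the pointwise bounds $\abs{(f-Q_{2n}f)(z)}\le c_{r,\rho}\,\cosh(n\log r)/\sinh(n\log\rho)$ and $\abs{(Q_{2n}f)(z)}\le c_{r,\rho}\,(\cosh(n\log\rho)+\cosh(n\log r))/\sinh(n\log\rho)$. Taking the supremum over such $z$, which by maximum modulus equals the $H^\infty(A_r)$-norm of the relevant function, and then the supremum over $\norm{f}{H^\infty(A_R)}\le1$, bounds the two operator norms; finally letting $\rho\to R$, so that $c_{r,\rho}\to c_{r,R}$ and $\sinh(n\log\rho)\to\sinh(n\log R)$, yields \eqref{eq:normJQ2n} and \eqref{eq:normQ2n}.
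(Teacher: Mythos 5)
Your proof is correct and follows essentially the same route as the paper: the same contour representations, the same elementary bounds $|\sigma(z^n)|\le\cosh(n\log r)$ and $|\sigma(\zeta^n)|\ge\sinh(n\log R)$, and the same algebraic simplification identifying the Cauchy factor with $c_{r,R}$. The only cosmetic differences are that the paper integrates directly over $\partial A_R$ using nontangential boundary values (via Rudin) rather than letting $\rho\to R$, and that it obtains the constant for \emph{every} $z\in A_r$ at once through the identity $\frac{R}{R-|z|}+\frac{R^{-1}}{|z|-R^{-1}}=\frac{R-R^{-1}}{(R+R^{-1})-(|z|+|z|^{-1})}$ combined with $|z|+|z|^{-1}\le r+r^{-1}$, so your maximum-modulus reduction to $|z|\in\{r,r^{-1}\}$, while valid, is not actually needed to recover the sharp constant.
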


\begin{proof}
We start with the following simple inequalities:
\begin{enumerate}[(i)]
\item $|\sigma(z^n)|\leq \frac{1}{2}(r^n+r^{-n})$ for any $z\in A_r$ and any $n\in
\mathbb{N}$;
\item $|\sigma(\zeta^n)|\geq \frac{1}{2}(R^n-R^{-n})$ for any $\zeta \in \partial A_R$
and any $n\in \mathbb{N}$.
\end{enumerate}
Next, fix $f\in H^\infty(A_R)$ with $\norm{f}{H^\infty(A_R)} \leq 1$ and let
$z\in A_r$.
By \cite[Theorem 17.11]{Rudin}, the (nontangential) limit $f^*(\zeta)$ for
$\zeta \in \partial A_R$
exists a.e.\ and $f^*$ is integrable on $\partial A_R$. Moreover, the integral
representations (\ref{eq:fQ2nf}) and (\ref{eq:Q2nf}) remain valid with $\partial A_R$ in
place of $\partial A$.
Using the inequalities (i) and (ii) above, we thus obtain
\begin{multline*}
|f(z) - Q_{2n}f(z)| \leq  \frac {1}{2\pi}
\int_{|\zeta|=R} \frac{|f^\ast(\zeta)||\sigma(z^n)|}{|z - \zeta||\sigma(\zeta^n)|}\,|d\zeta|
+ \frac {1}{2\pi}
\int_{|\zeta|=R^{-1}} \frac{|f^\ast(\zeta)||\sigma(z^n)|}{|z - \zeta||\sigma(\zeta^n)|}
\,|d\zeta| \\
\leq
\left( \frac{R}{R-|z|}  + \frac{R^{-1}}{|z| - R^{-1}}
\right)
\left(\frac{r^n + r^{-n}}{R^n-R^{-n}}\right)
=\left(\frac{R-R^{-1}}{(R+R^{-1})-(|z|+|z|^{-1})}\right)
\left(\frac{r^n + r^{-n}}{R^n-R^{-n}}\right),
\end{multline*}
which, after invoking inequality (i) once more, yields
\[
|f(z) - Q_{2n}f(z)| \leq
\frac{\sinh(\log{R})}{\cosh(\log(R)) - \cosh(\log(r))}
\frac{\cosh(n\log(r))}{\sinh(n\log(R))},
\]
which in turn furnishes \eqref{eq:normJQ2n}.

Similarly, for $f\in H^\infty(A_R)$ with $\norm{f}{H^\infty(A_R)} \leq 1$ and
$z\in A_r$ we obtain
\begin{align*}
|Q_{2n}f(z)| \leq
\left(\frac{R-R^{-1}}{(R+R^{-1})-(r+r^{-1})}\right)
\left(\frac{R^n+R^{-n} + r^n + r^{-n}}{R^n-R^{-n}}\right)\,,
\end{align*}
from which \eqref{eq:normQ2n} follows.
\end{proof}

\begin{remark}
In particular, Lemma \ref{lem:J} implies that
\[\norm{J-Q_{2n}}{H^\infty(A_R)\to H^\infty(A_r)} =
O\left(\left({\frac{r}{R}}\right)^n\right) \text{ as } n\to \infty,\]
 \[\norm{Q_{2n}}{H^\infty(A_R)\to H^\infty(A_r)} =
O\left( 1 \right) \text{ as } n\to \infty,\]
that is, equidistant Lagrange interpolation is stable and converges exponentially to the canonical embedding of $H^\infty(A_R)$ in $H^\infty(A_r)$.
\end{remark}

\subsection{Lagrange--Chebyshev interpolation on the interval $[-1,1]$}
\label{sec:lag_cheb}
Lagrange interpolation on the unit circle at equidistant nodes is closely
related to Lagrange--Chebyshev
interpolation on the interval $[-1,1]$, as we shall see presently.

We write $T_n$ for the Chebyshev polynomial
of the first kind of degree $n$, which is  given by
$T_n(\cos(\theta)) = \cos (n\theta)$ for $n\in\mathbb{N}_0$.
The zeros of $T_n$, referred to as Chebyshev nodes (of order $n$),
are the orthogonal projections of the $2n$ equidistant nodes
in \eqref{eq:zk} onto the interval $[-1,1]$, and
are given by
\begin{equation}\label{eq:xk}
x_k = \cos\left ( {\frac{(2k+1)\pi }{2n}} \right ) \quad (k=0,\ldots, n-1).
\end{equation}

Suppose now that we are given a continuous function
$f\colon[-1,1] \to \mathbb{C}$. The Lagrange--Chebyshev interpolation
problem is to find the unique polynomial $P_nf$ of degree $n-1$ that
coincides with $f$ at the Chebyshev nodes ${x_0, \ldots, x_{n-1}}$
of order $n$. This polynomial
can be written
\[(P_nf)(x) = \sum_{k=0}^{n-1} f(x_k) l_k(x),\]
where $l_k$ is the Lagrange polynomial
associated with $x_k$,
given by
\[ l_k(x) = \frac{T_n(x)}{T_n'(x)(x-x_k)}.\]
The resulting projection operator $P_n \colon C([-1,1])\to C([-1,1])$ will be
referred to as
Lagrange--Chebyshev projection operator.
Following the same arguments as in Section \ref{sec:circle_interpolation},
for $f$ extending holomorphically to a complex neighborhood
$U\supset[-1,1]$, it is easy to see that $P_n$ has the representation
\begin{equation}\label{eq:Pnf}
(P_nf)(z) = \frac{1}{2\pi i}\int_{
  \mathcal{C}}\frac{f(\zeta)(T_n(\zeta)-T_n(z))}{(\zeta -z)T_n(\zeta)}\,d\zeta \qquad(z \in U),
\end{equation}
where $\mathcal{C}$ is a simple closed positively oriented contour in $U$
containing $z$ and the interval $[-1,1]$ in its interior.

The operators $P_{n}$ and $Q_{2n}$ are
intimately related, and, as we shall see in Lemma \ref{lem:Jhat},
the convergence properties of the former can be deduced from those
of the latter.
Before establishing this, we require some more
terminology.

\begin{notation} For $\rho\in \mathbb{R}$ with $\rho>1$ we write
\[E_\rho := \left\{ \frac{1}{2} (w + w^{-1}) : w \in A_\rho\right\}\] for
the domain in $\mathbb{C}$ containing the origin, bounded by the ellipse
with lengths of major and minor semi-axis
given by $a = \cosh(\log \rho)$ and $b = \sinh(\log \rho)$,
respectively, and foci at $-1$ and $1$. We shall refer to $E_\rho$ as a \emph{standard elliptic domain}, or, in slight abuse of terminology, simply as a \emph{standard ellipse}.
Note also that we have
\[E_\rho = \sigma(A_\rho).\]
\end{notation}

To each $E_\rho$ we associate the
Hardy space $H^\infty(E_\rho)$, that is, the Banach space
of bounded holomorphic functions on $E_\rho$ equipped with the norm
$\norm{f}{H^\infty(E_\rho)} = \text{sup}\{|f(z)|\colon z \in
E_\rho\}$.
We shall now
show that there is an
isometric isomorphism between $H^\infty(E_\rho)$ and the Banach space
\[H^{\infty}_e(A_\rho) = \{f\in H^\infty(A_\rho) \colon f(z) = f(1/z), \,\forall z\in A_\rho\}.\]

\begin{lemma}\label{lem:iso}
Let $D_\sigma$ be the
composition operator given by
\[D_\sigma\colon  f \mapsto f \circ \sigma.\]
Then, $D_\sigma$ is an isometric isomorphism between  $H^\infty(E_\rho)$ and
$H^\infty_e(A_\rho)$.
\end{lemma}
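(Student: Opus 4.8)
The plan is to prove that $D_\sigma$ is a well-defined isometry onto $H^\infty_e(A_\rho)$ by exploiting the properties of the Joukowski map $\sigma(w) = \frac{1}{2}(w + w^{-1})$ established in the notation above, in particular the identity $E_\rho = \sigma(A_\rho)$. First I would verify that $D_\sigma$ maps $H^\infty(E_\rho)$ into $H^\infty_e(A_\rho)$. If $f \in H^\infty(E_\rho)$, then $f \circ \sigma$ is holomorphic on $A_\rho$ (as a composition of holomorphic maps) and bounded, since $\sigma(A_\rho) = E_\rho$ gives
\[
\sup_{w \in A_\rho} |f(\sigma(w))| = \sup_{z \in E_\rho} |f(z)| = \norm{f}{H^\infty(E_\rho)} < \infty.
\]
This computation simultaneously shows $\norm{D_\sigma f}{H^\infty(A_\rho)} = \norm{f}{H^\infty(E_\rho)}$, i.e.\ $D_\sigma$ is isometric. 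The evenness $(D_\sigma f)(w) = (D_\sigma f)(1/w)$ follows immediately from the symmetry $\sigma(w) = \sigma(1/w)$, so indeed $D_\sigma f \in H^\infty_e(A_\rho)$.

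Next I would establish surjectivity, which is the crux of the argument. Given $g \in H^\infty_e(A_\rho)$, I want to produce $f \in H^\infty(E_\rho)$ with $g = f \circ \sigma$. The natural candidate is $f(z) = g(w)$ where $w$ is any preimage of $z$ under $\sigma$; the key point is that this is \emph{well-defined}. For each $z \in E_\rho$ the equation $\sigma(w) = z$ has exactly two solutions in $A_\rho$, namely $w$ and $1/w$ (since $\sigma(w) = \sigma(1/w)$, and a quadratic has two roots), and the evenness condition $g(w) = g(1/w)$ guarantees $g$ takes the same value on both, so the definition of $f$ does not depend on the choice of preimage. This gives a candidate $f$ satisfying $f \circ \sigma = g$ by construction, and boundedness of $f$ is inherited from that of $g$ via the same supremum identity.

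The remaining obligation, and the step I expect to require the most care, is verifying that this $f$ is \emph{holomorphic} on $E_\rho$. The map $\sigma$ is a local biholomorphism away from its critical points $w = \pm 1$ (where $\sigma'(w) = \frac{1}{2}(1 - w^{-2})$ vanishes), and these map to the branch points $z = \pm 1$, the foci of the ellipse. Away from $\{-1, 1\}$ one can locally invert $\sigma$ holomorphically and write $f = g \circ \sigma^{-1}$ locally, so $f$ is holomorphic on $E_\rho \setminus \{-1, 1\}$. At the two exceptional points one argues that $f$ is holomorphic there as well: the potential obstruction is a square-root-type branch singularity, but this is precisely killed by the evenness of $g$. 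Concretely, near $w = 1$ write $w = 1 + s$; then $z - 1 = \sigma(w) - 1 = \frac{1}{2} s^2/(1+s)$ depends on $s$ through $s^2$ to leading order, and the even symmetry means $g$ depends on $s$ only through the even combination that corresponds to a holomorphic function of $z - 1$. Since $f$ is already bounded near these points, Riemann's removable singularity theorem applies and $f$ extends holomorphically across $\pm 1$. Thus $f \in H^\infty(E_\rho)$, completing surjectivity.

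Finally I would note that linearity of $D_\sigma$ is immediate, and that an isometric linear bijection is automatically an isometric isomorphism of Banach spaces (its inverse $f \mapsto f \circ \sigma^{-1}$, suitably interpreted, is then also isometric). This assembles the claim. The only genuinely delicate point throughout is the holomorphic extension across the foci $\pm 1$, where the evenness hypothesis on $H^\infty_e(A_\rho)$ does exactly the work needed to avoid a branch point; everything else is a direct consequence of $\sigma$ being a proper two-to-one holomorphic map from $A_\rho$ onto $E_\rho$ with the stated symmetry.
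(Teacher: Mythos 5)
Your proof is correct, but the surjectivity argument takes a genuinely different route from the paper's. The paper expands a given $g\in H^\infty_e(A_\rho)$ in a Laurent series, uses the evenness to pair coefficients ($c_n=c_{-n}$) and the identity $T_n(\sigma(z))=\sigma(e_n(z))=\tfrac12(z^n+z^{-n})$ to rewrite $g$ as a Chebyshev series in $\sigma(z)$, and then reads off the preimage $f(w)=c_0(g)+2\sum_{n\ge1}c_n(g)T_n(w)$, with holomorphy coming from locally uniform convergence. You instead define $f$ pointwise via $f(z)=g(w)$ for any $\sigma$-preimage $w$ of $z$ (well defined because the two roots of $w^2-2zw+1=0$ are $w$ and $1/w$, both lying in the inversion-symmetric annulus, and $g(w)=g(1/w)$), obtain holomorphy off the foci by locally inverting $\sigma$ where $\sigma'\neq0$, and remove the potential singularities at $\pm1$ by boundedness and Riemann's theorem. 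Both arguments are complete; the motivational expansion near $w=1$ in your write-up is dispensable since the removable-singularity step already does the work. The trade-off is that your construction is more geometric and would apply verbatim to any proper two-to-one holomorphic map with a deck symmetry, whereas the paper's series computation is tied to the Joukowski map but has the side benefit of exhibiting the explicit Chebyshev expansion $g=c_0(g)+2\sum_n c_n(g)\,T_n\circ\sigma$, which is precisely the identity exploited later in relating $Q_{2n}$ to $P_n$ (Lemma~\ref{lem:Pn_def}) and in the practical formula \eqref{eq:Pn_practical}.
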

\begin{proof}
As $\sigma$ is holomorphic on $A_\rho$ with
$\sigma(A_\rho)\subseteq E_\rho$, it follows that $D_\sigma f$
is holomorphic on $A_\rho$
for any $f\in H^\infty(E_\rho)$.
Since $\sigma(z) = \sigma(1/z)$ it follows that
$(D_\sigma f)(z) = (D_\sigma f) (1/z)$.
Note that $\sigma$ is surjective (more precisely it is a two-to-one map from
$A_\rho$ to $E_\rho$), which implies that $D_\sigma$ is an isometry,
since
\[||D_\sigma f||_{H_e^\infty(A_\rho)}
= \sup_{z\in A_\rho} |(f\circ \sigma)(z)| =
\sup_{w \in \sigma(A_\rho)}| f(w)| =
\sup_{w\in E_\rho}|f(w)| = \|f\|_{H^\infty(E_\rho)}.\]
Moreover, the surjectivity of $\sigma$
implies that $D_\rho$ is injective.

In order to show that $D_\sigma$ is surjective,
we pick $g\in H_e^\infty(A)$ and write it as a Laurent series
$g(z) = \sum_{n=-\infty}^{\infty} c_n e_n(z)$.
Since $g(z) = g(1/z)$ it follows that
$c_n(g) = c_{-n}(g)$ for all $n\in\N$,
and so
\[g(z) = c_0(g) + 2\sum_{n=1}^{\infty} c_n(g) (\sigma(e_n(z))
= c_0(g) + 2\sum_{n=1}^{\infty} c_n(g) T_n (\sigma(z)).\]
Now, as this series converges uniformly on
compact subsets of $A_\rho$,
the surjectivity of
$\sigma$ implies uniform convergence of
$c_0(g) + 2\sum_{n=1}^{\infty} c_n(g) T_n(w)$
on compact subsets of $E_\rho$,
implying that $f(w) = c_0(g) + 2\sum_{n=1}^{\infty} c_n(g) T_n(w)$
is holomorphic on $E_\rho$.
As $\sup_{w\in E_\rho}|f(w)| = \|g\|_{H_e^\infty(A_\rho)}$ we have an
$f\in H^\infty(E_\rho)$ satisfying $g = D_\sigma f$.
\end{proof}

Using the fact that $\sigma(z) = \sigma(1/z)$ for any $z\in A_\rho$, we can consider
the operator $Q_{2n}$ given in \eqref{eq:Q2nf} as an operator from
$H^\infty_e(A_R)$ to $H^\infty_e(A_r)$ for some $1<r<R$.

\begin{lemma}\label{lem:Pn_def}
For $1 < r < R$, consider $Q_{2n}$ in \eqref{eq:Q2nf}
as an operator from $H^\infty_e(A_R)$ to $H^\infty_e(A_r)$, and consider
$P_n$ in \eqref{eq:Pnf} as an operator from
$H^\infty(E_R)$ to $H^\infty(E_r)$.
Denote by $D_{\sigma,q}\colon H^\infty(E_q) \to H_e^\infty(A_q)$ with
$q = r, R$,  the isomorphisms defined in Lemma \ref{lem:iso}.
Then,
 \[Q_{2n} D_{\sigma,R} = D_{\sigma, r} P_n.\]
 \end{lemma}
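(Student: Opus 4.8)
The plan is to exploit the uniqueness of the Lagrange interpolant rather than to manipulate the contour representations directly. Fix $f \in H^\infty(E_R)$ and set $g = D_{\sigma,R} f = f\circ\sigma \in H^\infty_e(A_R)$. I want to show that $Q_{2n} g = (P_n f)\circ\sigma$, which is precisely the asserted identity $Q_{2n} D_{\sigma,R} = D_{\sigma, r} P_n$ evaluated on $f$.

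First I would record the algebraic identity linking the two interpolation schemes, namely $T_n(\sigma(z)) = \sigma(z^n) = \tfrac12(z^n + z^{-n})$ for $z \in A_\rho$, which follows at once from $T_n(\cos\theta) = \cos(n\theta)$ by writing $z = e^{i\theta}$ on $\T$ and extending by analyticity; this is the same identity already used to pass from \eqref{eq:QNf} to \eqref{eq:Q2nf}. A direct consequence is that $\sigma$ maps the equidistant nodes $z_k$ of \eqref{eq:zk} onto the Chebyshev nodes $x_k$ of \eqref{eq:xk}: indeed $\sigma(z_k) = \cos\bigl(\tfrac{(2k+1)\pi}{2n}\bigr)$, so $\sigma(z_k) = x_k$ for $0 \le k \le n-1$, while $\sigma(z_{2n-1-k}) = x_k$ for the remaining indices, each Chebyshev node thus being attained exactly twice.

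The core of the argument is then a uniqueness observation. Recall that $Q_{2n} g$ is the unique element of $\Lambda_{-n,n-1} = \operatorname{span}\{e_l : -n \le l \le n-1\}$ interpolating $g$ at $z_0, \ldots, z_{2n-1}$. I would check that $(P_n f)\circ\sigma$ is an admissible competitor. On the one hand, since $P_n f$ is a polynomial of degree at most $n-1$ and each $\sigma(z)^j$ is an even Laurent polynomial supported on the monomials $e_{-j}, \ldots, e_j$, the composition $(P_n f)\circ\sigma$ lies in $\operatorname{span}\{e_l : -(n-1) \le l \le n-1\} \subseteq \Lambda_{-n,n-1}$. On the other hand, using the node correspondence above together with the interpolation property $(P_n f)(x_k) = f(x_k)$, one obtains $(P_n f)(\sigma(z_k)) = f(\sigma(z_k)) = g(z_k)$ for every $k = 0, \ldots, 2n-1$. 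Hence $(P_n f)\circ\sigma$ is an element of $\Lambda_{-n,n-1}$ agreeing with $g$ at all $2n$ nodes, and by uniqueness it coincides with $Q_{2n} g$.

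The only point requiring care is the node bookkeeping in the second step: verifying that $\sigma$ collapses the $2n$ equidistant nodes exactly two-to-one onto the $n$ Chebyshev nodes, consistently with the two-to-one nature of $\sigma \colon A_\rho \to E_\rho$ used throughout. This is a short trigonometric computation, but it is what makes the dimension count balance ($\dim \Lambda_{-n,n-1} = 2n$ conditions matched against $2n$ nodes) and is the true content of the lemma. Once it is in place the conclusion is immediate, and I would not need to invoke the contour formulas \eqref{eq:Q2nf} and \eqref{eq:Pnf} at all; alternatively, the same identity can be recovered by substituting $\zeta = \sigma(w)$ in \eqref{eq:Pnf} and comparing with \eqref{eq:Q2nf} through the kernel identity $T_n\circ\sigma = \sigma\circ e_n$, but the uniqueness route is shorter and more transparent.
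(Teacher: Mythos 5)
Your proof is correct, but it takes a genuinely different route from the paper. The paper works entirely with the contour integral representations \eqref{eq:Q2nf} and \eqref{eq:Pnf}: it establishes the algebraic identity \eqref{eq:keyrelation} for the Joukowski kernel, splits the integral over $\partial A_\rho$ into the two boundary circles $|\zeta|=\rho$ and $|\zeta|=\rho^{-1}$, recombines them using that identity, and then changes variables $\zeta=\sigma(\rho e^{it})$ to land on the Hermite integral formula for $P_n$. You instead bypass the contour formulas altogether and argue by uniqueness of the interpolant: $\sigma$ carries the $2n$ equidistant nodes two-to-one onto the $n$ Chebyshev nodes (your bookkeeping $\sigma(z_k)=x_k$ for $0\le k\le n-1$ and $\sigma(z_{2n-1-k})=x_k$ is right), $(P_nf)\circ\sigma$ lies in $\operatorname{span}\{e_l:-(n-1)\le l\le n-1\}\subseteq\Lambda_{-n,n-1}$, and it matches $g=f\circ\sigma$ at all $2n$ nodes, so it must equal $Q_{2n}g$ since $\dim\Lambda_{-n,n-1}=2n$. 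The one point you should make explicit is that the operators named in the lemma are defined by the contour formulas \eqref{eq:Q2nf} and \eqref{eq:Pnf}, so you are implicitly invoking the equivalence of those formulas with the interpolation definitions for holomorphic data; this is established in Section \ref{sec:circle_interpolation} (via the Residue Theorem), so it is legitimate, but it is the hinge of your reduction. Your argument is shorter and more conceptual --- it makes clear that the lemma is really a statement about nodes and degrees --- whereas the paper's computation has the advantage of staying within the integral-kernel framework that is then reused verbatim for the norm estimates in Lemma \ref{lem:J} and Lemma \ref{lem:Jhat}.
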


\begin{proof}
First observe that we have the following relation
\begin{equation}
\label{eq:keyrelation}
\frac{\zeta}{\zeta-z} - \frac{\zeta^{-1}}{\zeta^{-1}-z} =
\frac{1}{1-z\zeta^{-1}} + \frac{z^{-1}}{\zeta-z^{-1}} =
\frac{\zeta - \zeta^{-1}}{(1-z\zeta^{-1})(\zeta-z^{-1})} =
\frac{\sigma'(\zeta) \zeta}{\sigma(\zeta) - \sigma(z)},
\end{equation}
where $\zeta$ and $z$ are any non-zero complex numbers.
For any such $z$ we also have the relation
$T_n(\sigma(z)) = \frac{1}{2}(z^n + z^{-n}) = \sigma(e_n(z)) = \sigma(e_{-n}(z))$.
Using the shorthand
\[ K(\zeta, z) =
\frac{\sigma(\zeta^n) - \sigma(z^n)}{\sigma(\zeta^n)}
= \frac{T_n(\sigma(\zeta)) -
    T_n(\sigma(z))}{T_n(\sigma(\zeta))},
\]
we obtain, for any $f\in H^\infty(E_R)$, any $z\in A_r$ and any $\rho$ with $r<\rho < R$
\begin{align*}
Q_{2n} (D_{\sigma,R} f) (z)
&= \frac{1}{2\pi i} \int_{\partial A_\rho}
\frac{f(\sigma(\zeta))K(\zeta, z)}{\zeta-z}\, d\zeta
\\
&= \frac{1}{2\pi} \int_{0}^{2\pi}
\frac{f(\sigma(\rho e^{it})) K(\rho e^{it}, z)
\rho e^{it} }{\rho e^{it}-z} \,dt
-
\frac{1}{2\pi}\int_{0}^{2\pi}
\frac{f(\sigma((\rho e^{it})^{-1}))K((\rho e^{it})^{-1}, z)}{((\rho e^{it})^{-1}-z)\rho e^{it}} \,dt\\
&=
\frac{1}{2\pi} \int_{0}^{2\pi}
\frac{f(\sigma(\rho e^{it})) K(\rho e^{it}, z) \sigma'(\rho e^{it})\rho e^{it}}
{\sigma(\rho e^{it})-\sigma(z)} \,dt\\
&= \frac{1}{2\pi i} \int_{\partial E_\rho}
\frac{f(\zeta)(T_n(\zeta) - T_n(\sigma(z)))}{(\zeta-\sigma(z))T_n(\zeta)}\,d\zeta\\
&= D_{\sigma,r} (P_nf) (z),
\end{align*}
where the third equality uses relation \eqref{eq:keyrelation},
and the penultimate equality follows
from a change of variables with $\zeta = \sigma(\rho e^{it})$.
\end{proof}

The convergence properties of $P_n$ now follow from
the convergence properties of $Q_{2n}$.

\begin{lemma}\label{lem:Jhat}
Let $1<r<R$ and let
$\hat{J}\colon H^\infty(E_R) \to H^{\infty}(E_r)$ denote the canonical
embedding. Then, for any $n\in \N$,
\begin{equation}\label{eq:normJP2n}
  \|\hat{J} - P_n\|_{H^\infty(E_R)\to H^\infty(E_r)}
  \leq
c_{r,R}\, \frac{\cosh(n\log(r))}{\sinh(n\log(R))},
\end{equation}

\begin{equation}\label{eq:normP2n}
\norm{P_n}{H^\infty(E_R)\to H^\infty(E_r)}\leq
c_{r,R}\, \frac{\cosh(n \log(R)) + \cosh(n\log(r))}{\sinh(n\log(R))},
\end{equation}
where
\[ c_{r,R} = \frac{\sinh(\log{R})}{\cosh(\log(R)) - \cosh(\log(r))}. \]
\end{lemma}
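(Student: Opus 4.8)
The plan is to deduce Lemma~\ref{lem:Jhat} directly from Lemma~\ref{lem:J} by transporting everything through the isometric isomorphisms of Lemma~\ref{lem:iso} and the intertwining relation of Lemma~\ref{lem:Pn_def}. The key observation is that the two estimates \eqref{eq:normJP2n} and \eqref{eq:normP2n} have \emph{exactly} the same right-hand sides as the corresponding estimates \eqref{eq:normJQ2n} and \eqref{eq:normQ2n} for $Q_{2n}$, so there is nothing new to compute; the entire content of the proof is an operator-norm transfer via isometries, under which norms are preserved.

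First I would set up the relevant diagram. By Lemma~\ref{lem:iso}, for $q=r,R$ the composition operator $D_{\sigma,q}\colon H^\infty(E_q)\to H^\infty_e(A_q)$ is an isometric isomorphism. Restricting attention to the even subspaces $H^\infty_e(A_R)$ and $H^\infty_e(A_r)$ (which is legitimate since, as noted in the excerpt, $\sigma(z)=\sigma(1/z)$ guarantees $Q_{2n}$ maps even functions to even functions, and the canonical embedding $J$ visibly preserves evenness), Lemma~\ref{lem:Pn_def} gives the intertwining relation $Q_{2n}D_{\sigma,R}=D_{\sigma,r}P_n$. I would establish the analogous relation for the embeddings, namely $J D_{\sigma,R}=D_{\sigma,r}\hat J$: this is immediate because both sides send $f\in H^\infty(E_R)$ to the restriction $f\circ\sigma|_{A_r}$, since $\hat J$ is restriction on the $E$-side and $D_\sigma$ is precomposition with $\sigma$, and these two operations commute.

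Next I would solve for $P_n$ and $\hat J$. Because $D_{\sigma,r}$ is invertible, the two relations yield $P_n=D_{\sigma,r}^{-1}Q_{2n}D_{\sigma,R}$ and $\hat J=D_{\sigma,r}^{-1}JD_{\sigma,R}$, hence
\[
\hat J-P_n=D_{\sigma,r}^{-1}\,(J-Q_{2n})\,D_{\sigma,R}.
\]
Taking operator norms and using that $D_{\sigma,R}$ is an isometry (so $\|D_{\sigma,R}\|=1$) and that $D_{\sigma,r}^{-1}$ is an isometry (so $\|D_{\sigma,r}^{-1}\|=1$), I obtain
\[
\|\hat J-P_n\|_{H^\infty(E_R)\to H^\infty(E_r)}
=\|J-Q_{2n}\|_{H^\infty_e(A_R)\to H^\infty_e(A_r)}
\le\|J-Q_{2n}\|_{H^\infty(A_R)\to H^\infty(A_r)}.
\]
Applying \eqref{eq:normJQ2n} then gives \eqref{eq:normJP2n}. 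The identical argument with $P_n=D_{\sigma,r}^{-1}Q_{2n}D_{\sigma,R}$ and \eqref{eq:normQ2n} gives \eqref{eq:normP2n}.

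The only genuine subtlety—the one point I would be careful to justify rather than wave at—is that the norm estimate for $Q_{2n}$ in Lemma~\ref{lem:J} is stated for $Q_{2n}$ acting on the \emph{full} Hardy spaces $H^\infty(A_R)\to H^\infty(A_r)$, whereas the transfer above produces the norm of its restriction to the \emph{even} subspaces $H^\infty_e(A_R)\to H^\infty_e(A_r)$. Since $H^\infty_e(A_q)$ is a closed subspace of $H^\infty(A_q)$ carrying the same supremum norm and $Q_{2n}$ preserves evenness, the restricted operator norm is bounded above by the full operator norm, which is exactly the direction needed for the inequalities. This monotonicity of the operator norm under restriction to an invariant subspace is the crux; everything else is the bookkeeping of isometric intertwinings, which I expect to be routine.
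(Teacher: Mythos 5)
Your proposal is correct and follows essentially the same route as the paper: the paper's proof likewise writes $\hat J=D_{\sigma,r}^{-1}JD_{\sigma,R}$ and $P_n=D_{\sigma,r}^{-1}Q_{2n}D_{\sigma,R}$ and invokes Lemma~\ref{lem:J}, exactly as you do. Your additional remark that the restricted operator norm on the even subspaces $H^\infty_e(A_q)$ is dominated by the full operator norm is a point the paper leaves implicit, and you justify it correctly.
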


\begin{proof}
 Let $D_{\sigma,r}$ and $D_{\sigma,R}$ denote the isometric
 isomorphisms given in
 Lemma \ref{lem:iso} and observe that
$\hat{J}=D_{\sigma,r}^{-1} J D_{\sigma,R}$ and
$P_n = D_{\sigma,r}^{-1} Q_{2n} D_{\sigma,R}$.
The bounds \eqref{eq:normJP2n} and \eqref{eq:normP2n}
now follow from Lemma~\ref{lem:J}.
\end{proof}

\begin{remark}
For practical purposes (see the algorithm in
Section~\ref{sec:applications}),
a more useful representation of the Lagrange--Chebyshev
interpolation operator $P_n$ is given by (see, for example, \cite{Elliott})
\begin{equation}\label{eq:Pn_practical}
(P_nf)(x) =  \frac{d_{0,n}(f)}{2} +\sum_{l=1}^{n-1} d_{l, n}(f)T_l(x)
\qquad (x\in [-1,1], f\in C([-1,1])),
\end{equation}
where
\[ d_{l,n}(f) = \frac{2}{n}\sum_{k=0}^{n-1} f(x_k) T_l(x_k) \qquad (f\in C([-1,1])),\]
which can be derived from expression \eqref{eq:Q2nf_practical}
and the fact that $Q_{2n} D_\sigma = D_\sigma P_n$.
\end{remark}

\subsection{Lagrange--Chebyshev
approximation of holomorphic functions on general elliptic domains}
\label{sec:lag_cheb_gen}

In the following, we generalise the results of the previous subsection
to functions holomorphic on domains bounded by ellipses with arbitrary
(distinct) foci $\gamma_+, \gamma_- \in \mathbb{C}$.

For $\gamma = (\gamma_+, \gamma_-)$, define a linear map $\alpha_\gamma\colon \mathbb{C} \to \mathbb{C} $
as
\[\alpha_\gamma(z) = \frac{\gamma_+ - \gamma_-}{2} z + \frac{\gamma_+ + \gamma_-}{2}.\]
Then \[E_{\gamma,\rho} = \alpha_\gamma (E_\rho)\]
is a domain bounded by an ellipse with foci at $\gamma_+$ and
$\gamma_-$. For $\gamma = (1, -1)$ we recover the standard elliptic
domain $E_\rho$.

Let $H^\infty(E_{\gamma,\rho})$ denote the Banach space of
bounded holomorphic functions on $E_{\gamma,\rho}$
equipped with the supremum norm. We have the following simple lemma.

\begin{lemma}\label{lem:isoCf}
Let $C_{\alpha_\gamma}$
be the composition
operator given by
\[C_{\alpha_\gamma}\colon f \mapsto f\circ \alpha_\gamma.\]
Then, $C_{\alpha_\gamma}$
is an isometric isomorphism between $H^\infty(E_{\gamma,\rho})$
and $H^\infty(E_{\rho})$.
\end{lemma}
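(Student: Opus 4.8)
The plan is to show that $C_{\alpha_\gamma}$ is a well-defined isometric isomorphism by checking the three properties separately: that it maps into the correct space, that it preserves the norm (which automatically gives injectivity), and that it is surjective. The key structural fact is that $\alpha_\gamma$ is an affine bijection of $\mathbb{C}$ carrying the standard elliptic domain $E_\rho$ onto $E_{\gamma,\rho}$, so its inverse $\alpha_\gamma^{-1}$ is also affine and carries $E_{\gamma,\rho}$ back onto $E_\rho$. This mirrors exactly the structure of the proof of Lemma~\ref{lem:iso}, only here the situation is simpler because $\alpha_\gamma$ is a biholomorphic bijection rather than a two-to-one map.

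First I would observe that for any $f \in H^\infty(E_{\gamma,\rho})$, the composition $C_{\alpha_\gamma}f = f \circ \alpha_\gamma$ is holomorphic on $E_\rho$, since $\alpha_\gamma$ is holomorphic (indeed affine) and maps $E_\rho$ into $E_{\gamma,\rho} = \alpha_\gamma(E_\rho)$. Next, to establish the isometry property, I would use the surjectivity of $\alpha_\gamma$ as a map from $E_\rho$ onto $E_{\gamma,\rho}$ to write
\[
\|C_{\alpha_\gamma}f\|_{H^\infty(E_\rho)}
= \sup_{z\in E_\rho} |f(\alpha_\gamma(z))|
= \sup_{w\in \alpha_\gamma(E_\rho)} |f(w)|
= \sup_{w\in E_{\gamma,\rho}} |f(w)|
= \|f\|_{H^\infty(E_{\gamma,\rho})}.
\]
Being an isometry, $C_{\alpha_\gamma}$ is automatically injective.

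For surjectivity, given any $g \in H^\infty(E_\rho)$, I would define $f = g \circ \alpha_\gamma^{-1}$, where $\alpha_\gamma^{-1}$ is the inverse affine map. Since $\alpha_\gamma^{-1}$ is holomorphic and maps $E_{\gamma,\rho}$ onto $E_\rho$, the function $f$ is holomorphic and bounded on $E_{\gamma,\rho}$, with $\|f\|_{H^\infty(E_{\gamma,\rho})} = \|g\|_{H^\infty(E_\rho)}$ by the same surjectivity argument; moreover $C_{\alpha_\gamma}f = (g\circ\alpha_\gamma^{-1})\circ\alpha_\gamma = g$, so $C_{\alpha_\gamma}$ is onto. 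The one point requiring a word of care is that $\alpha_\gamma$ is genuinely invertible, which holds precisely because $\gamma_+ \neq \gamma_-$ forces the leading coefficient $(\gamma_+ - \gamma_-)/2$ to be nonzero; the distinctness of the foci, already built into the hypotheses, is exactly what guarantees this. No step here presents a real obstacle, as the whole argument is essentially a transport-of-structure along an affine biholomorphism; the simplicity relative to Lemma~\ref{lem:iso} stems from $\alpha_\gamma$ being a bijection.
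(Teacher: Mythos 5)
Your proof is correct and follows essentially the same route as the paper: the paper likewise notes that invertibility of $\alpha_\gamma$ gives $C_{\alpha_\gamma}^{-1} = C_{\alpha_\gamma^{-1}}$ and then verifies the isometry via the identical change-of-variables computation for the supremum. Your version merely spells out the well-definedness and surjectivity steps (and the role of $\gamma_+\neq\gamma_-$) in more detail than the paper's two-line argument.
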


\begin{proof}
As $a_\gamma$ is invertible, it follows that $C_{\alpha_\gamma}$ is invertible
with inverse $C^{-1}_{\alpha_\gamma} = C_{\alpha_\gamma^{-1}}$.
Also, for any $f\in H^\infty(E_{\gamma,\rho})$ we have
\[\norm{C_{\alpha_\gamma} f}{H^\infty(E_{\rho})} = \sup_{z\in E_\rho} |(f \circ \alpha_\gamma)(z)|
=\sup_{w \in \alpha_\gamma(E_\rho)} |f(w)| = \norm{f}{H^\infty(E_{\gamma,\rho})},\]
so $C_{\alpha_\gamma}$ is indeed an isometric isomorphism.
\end{proof}

Using the previous lemma we can now define generalised
Lagrange--Chebyshev projections $P_{\gamma,n}$ on
$H^\infty(E_{\gamma,R})$ by
\begin{equation}\label{eq:Pgn}
P_{\gamma,n} = C^{-1}_{\alpha_\gamma}
P_n  C_{\alpha_\gamma}.
\end{equation}
For $f$ a continuous function on the line segment
$\alpha_\gamma([-1,1])$,
$P_{\gamma,n}f$ is the polynomial of degree $n-1$ that coincides with $f$ at
the images of the Chebyshev points of order $n$ under
$\alpha_\gamma$. The following
is a straightforward consequence of Lemma~\ref{lem:Jhat} and the
previous lemma.

\begin{lemma}\label{lem:Jhatgamma}
Let $\hat{J_\gamma}\colon H^\infty(E_{\gamma,R}) \to H^{\infty}(E_{\gamma,r})$
denote the canonical embedding with $1<r<R$. Then, for any $n\in \N$,
\begin{equation}\label{eq:normJP2n_gamma}
  \|\hat{J_\gamma} - P_{\gamma,n}\|_{H^\infty(E_{\gamma,R})\to H^\infty(E_{\gamma,r})}
  \leq
c_{r,R}\, \frac{\cosh(n\log(r))}{\sinh(n\log(R))},
\end{equation}

\begin{equation}\label{eq:normP2n_gamma}
\norm{P_{\gamma,n}}{H^\infty(E_{\gamma,R})\to H^\infty(E_{\gamma,r})}\leq
c_{r,R} \, \frac{\cosh(n \log(R)) + \cosh(n\log(r))}{\sinh(n\log(R))},
\end{equation}
where
\[ c_{r,R} = \frac{\sinh(\log{R})}{\cosh(\log(R)) - \cosh(\log(r))}. \]
\end{lemma}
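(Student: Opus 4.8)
The plan is to reduce everything to Lemma~\ref{lem:Jhat} by conjugating with the composition operators supplied by Lemma~\ref{lem:isoCf}, so that the present lemma becomes a pure change-of-coordinates statement. Writing $C_{\alpha_\gamma,q}\colon H^\infty(E_{\gamma,q})\to H^\infty(E_q)$, $f\mapsto f\circ\alpha_\gamma$, for $q=r,R$ (each an isometric isomorphism by Lemma~\ref{lem:isoCf}), I would first record the two intertwining identities
\[
P_{\gamma,n} = C_{\alpha_\gamma,r}^{-1}\,P_n\,C_{\alpha_\gamma,R},
\qquad
\hat{J_\gamma} = C_{\alpha_\gamma,r}^{-1}\,\hat{J}\,C_{\alpha_\gamma,R}.
\]
The first is just the definition \eqref{eq:Pgn}, read with the radii made explicit: the inner operator maps $H^\infty(E_{\gamma,R})$ into $H^\infty(E_R)$, $P_n$ sends this into $H^\infty(E_r)$, and the outer inverse returns to $H^\infty(E_{\gamma,r})$. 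The second requires only a short verification: since $\alpha_\gamma$ is an affine bijection restricting to bijections $E_R\to E_{\gamma,R}$ and $E_r\to E_{\gamma,r}$, for $f\in H^\infty(E_{\gamma,R})$ and $w\in E_{\gamma,r}$ one has $(C_{\alpha_\gamma,r}^{-1}\hat{J}C_{\alpha_\gamma,R}f)(w)=((f\circ\alpha_\gamma)\circ\alpha_\gamma^{-1})(w)=f(w)$, so the composite is exactly the restriction $\hat{J_\gamma}f=f|_{E_{\gamma,r}}$.

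Subtracting the two identities gives $\hat{J_\gamma}-P_{\gamma,n}=C_{\alpha_\gamma,r}^{-1}(\hat{J}-P_n)C_{\alpha_\gamma,R}$. Because $C_{\alpha_\gamma,R}$ and $C_{\alpha_\gamma,r}^{-1}$ are isometries, pre- and post-composition with them leaves operator norms unchanged, so
\[
\|\hat{J_\gamma}-P_{\gamma,n}\|_{H^\infty(E_{\gamma,R})\to H^\infty(E_{\gamma,r})}
= \|\hat{J}-P_n\|_{H^\infty(E_R)\to H^\infty(E_r)},
\]
and likewise $\|P_{\gamma,n}\|=\|P_n\|$ with the corresponding norms. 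The bounds \eqref{eq:normJP2n_gamma} and \eqref{eq:normP2n_gamma} then follow verbatim from \eqref{eq:normJP2n} and \eqref{eq:normP2n} in Lemma~\ref{lem:Jhat}.

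The only point requiring any care is the bookkeeping of the two distinct isomorphisms $C_{\alpha_\gamma,r}$ and $C_{\alpha_\gamma,R}$ at the inner and outer radii, together with the verification of the embedding identity above; both are routine once one observes that $\alpha_\gamma$ carries each standard ellipse $E_\rho$ onto the confocal $\gamma$-ellipse $E_{\gamma,\rho}$ of the same parameter $\rho$. There is no genuine analytic obstacle, as all quantitative content has already been extracted in Lemma~\ref{lem:Jhat}; what remains is simply to transport those estimates across an isometric change of coordinates.
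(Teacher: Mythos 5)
Your proposal is correct and is exactly the argument the paper intends: the paper states the lemma as a ``straightforward consequence'' of Lemma~\ref{lem:Jhat} and Lemma~\ref{lem:isoCf}, and your conjugation by the isometric isomorphisms $C_{\alpha_\gamma}$ at the two radii, together with the identity $\hat{J_\gamma}=C_{\alpha_\gamma,r}^{-1}\hat{J}C_{\alpha_\gamma,R}$, is precisely the omitted bookkeeping. Nothing further is needed.
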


\section{Approximation of transfer operators with holomophic data}

In this section we consider general transfer operators associated with holomorphic
data on elliptic and annular domains. We show that Lagrange--Chebyshev
and equidistant Lagrange approximants converge to the respective original operators
at exponential speed
in operator norm. These results are obtained using a factorisation argument
already presented in various papers (see, for example, \cite{BJ_LMS, BJ_Advances, SBJNonlin, BJS}),
together with Lemmas \ref{lem:J} and \ref{lem:Jhatgamma}.

\subsection{Transfer operators with holomorphic data on elliptic
  domains}
\label{sec:trans_ellipse}

For the remainder of this section we shall fix $1<r<R$
and a pair of foci $\gamma = (\gamma_+, \gamma_-)\in \mathbb{C}^2$
with $\gamma_+\neq \gamma_-$.

\begin{defn}
Let $1<r<R$ and let $\I$ be a finite or countable index set.
A {\it holomorphic map-weight system}
is given by a family $(\Phi_i)_{i\in\I}$ of holomorphic maps in $H^\infty(E_{\gamma,R})$ satisfying
\begin{equation}\label{eq:cont}
\bigcup_{i\in \I} \Phi_i(E_{\gamma,R}) \subseteq E_{\gamma,r},
\end{equation}
and a family of weights $(W_i)_{i\in\I}$ in $H^\infty(E_{\gamma,R})$
satisfying
\begin{equation}
\label{eq:weights}
S_{\gamma,R} = \sup \left \{ \sum_{i\in \I} |W_i(z)| :
z\in E_{\gamma,R} \right \} < \infty,
\end{equation}
\end{defn}

With each such holomorphic map-weight system we associate a
{\it transfer operator} given by

\begin{equation}\label{eq:tr}
\tr f = \sum_{i\in\I} W_i \cdot f \circ \Phi_i.
\end{equation}

In the following, we shall see that the transfer operator maps
$H^\infty(E_{\gamma, r})$ compactly into itself; moreover, the transfer operator
also maps $H^\infty(E_{\gamma, R})$ compactly into itself, and we will use
the same symbol $\tr$ for the operator on $H^\infty(E_{\gamma, r})$ as well as for
its restriction to $H^\infty(E_{\gamma, R})$. We shall also see that the
transfer operator on either $H^\infty(E_{\gamma, r})$ or $H^\infty(E_{\gamma, R})$
can be effectively approximated using Lagrange--Chebyshev
interpolation, in a sense to be made precise below.

We start by proving that the transfer operator can be lifted to a
bounded operator
from $H^\infty(E_{\gamma,r})$ to $H^\infty(E_{\gamma,R})$.

\begin{lemma}\label{lem:Ltilde}
The transfer operator arising from a holomorphic map-weight system
maps $H^\infty(E_{\gamma,r})$ continuously to $H^\infty(E_{\gamma,R})$
with
\[ \| \tr \|_{H^\infty(E_{\gamma,r}) \to H^\infty(E_{\gamma,R})} \leq S_{\gamma,R}.\]
\end{lemma}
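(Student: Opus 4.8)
The plan is to estimate the supremum of $|\tr f(z)|$ over $z \in E_{\gamma, R}$ for an arbitrary $f \in H^\infty(E_{\gamma, r})$, and show it is bounded by $S_{\gamma, R}\, \norm{f}{H^\infty(E_{\gamma,r})}$. The key structural fact that makes this work is the contraction condition \eqref{eq:cont}: although $f$ is only assumed holomorphic and bounded on the \emph{smaller} domain $E_{\gamma, r}$, each composition $f \circ \Phi_i$ is well-defined on the \emph{larger} domain $E_{\gamma, R}$ precisely because $\Phi_i$ maps $E_{\gamma, R}$ into $E_{\gamma, r}$. This is the mechanism by which the transfer operator gains regularity, and it is the heart of the factorisation argument alluded to in the text.

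First I would fix $f \in H^\infty(E_{\gamma, r})$ and, for each $i \in \I$, observe that $\Phi_i \in H^\infty(E_{\gamma,R})$ together with $\Phi_i(E_{\gamma,R}) \subseteq E_{\gamma,r}$ guarantees that $f \circ \Phi_i$ is holomorphic on $E_{\gamma, R}$, with the pointwise bound
\begin{equation*}
\sup_{z \in E_{\gamma,R}} |f(\Phi_i(z))| \leq \sup_{w \in E_{\gamma,r}} |f(w)| = \norm{f}{H^\infty(E_{\gamma,r})},
\end{equation*}
since $\Phi_i(z)$ ranges inside $E_{\gamma,r}$. Each summand $W_i \cdot (f \circ \Phi_i)$ is therefore holomorphic and bounded on $E_{\gamma, R}$.

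Next I would control the sum. For any $z \in E_{\gamma, R}$, the triangle inequality gives
\begin{equation*}
|\tr f(z)| \leq \sum_{i \in \I} |W_i(z)|\, |f(\Phi_i(z))| \leq \norm{f}{H^\infty(E_{\gamma,r})} \sum_{i \in \I} |W_i(z)| \leq S_{\gamma,R}\, \norm{f}{H^\infty(E_{\gamma,r})},
\end{equation*}
where the last step uses the weight summability condition \eqref{eq:weights}. This bound is uniform in $z$, so taking the supremum over $E_{\gamma,R}$ yields $\norm{\tr f}{H^\infty(E_{\gamma,R})} \leq S_{\gamma,R}\, \norm{f}{H^\infty(E_{\gamma,r})}$, which is the claimed operator norm estimate.

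The one point requiring slight care, and the closest thing to an obstacle, is justifying that $\tr f$ is genuinely holomorphic on $E_{\gamma, R}$ when $\I$ is countably infinite: I would argue that the partial sums are holomorphic and, by the bound above applied to tails, the series converges uniformly on $E_{\gamma,R}$ (the condition \eqref{eq:weights} forces the tail sums $\sum_{i \in F} |W_i(z)|$ to be uniformly small for cofinite $F$), so the limit is holomorphic by the standard theorem on uniform limits of holomorphic functions. Everything else is a routine application of the triangle inequality, and the real content is simply that conditions \eqref{eq:cont} and \eqref{eq:weights} are exactly calibrated to deliver the constant $S_{\gamma,R}$.
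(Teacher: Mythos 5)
Your norm estimate is exactly the paper's: the triangle inequality together with \eqref{eq:cont} and \eqref{eq:weights} gives $|\tr f(z)|\leq S_{\gamma,R}\norm{f}{H^\infty(E_{\gamma,r})}$ pointwise on $E_{\gamma,R}$, and that part is fine. The gap is in your justification that $\tr f$ is holomorphic when $\I$ is infinite. You assert that condition \eqref{eq:weights} ``forces the tail sums $\sum_{i\in F}|W_i(z)|$ to be uniformly small for cofinite $F$,'' i.e.\ that $\sup_{z\in E_{\gamma,R}}\sum_{i>k}|W_i(z)|\to 0$ as $k\to\infty$. That does not follow: \eqref{eq:weights} only says the \emph{full} sum is uniformly bounded by $S_{\gamma,R}$; it gives absolute convergence of the series at each fixed $z$, but nothing prevents the index at which the tail becomes small from depending on $z$ and escaping to infinity as $z$ approaches $\partial E_{\gamma,R}$. (Compare $W_i=\chi_{A_i}$ for a disjoint cover $\{A_i\}$: the full sum is identically $1$ yet every tail has supremum $1$. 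Holomorphy makes such examples harder to write down, but you have given no argument that it rules them out.) So the step ``hence the series converges uniformly on $E_{\gamma,R}$'' is unjustified as written.

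The fix is cheap and is what the paper does: the partial sums $g_k=\sum_{i\leq k}W_i\cdot(f\circ\Phi_i)$ are holomorphic on $E_{\gamma,R}$, uniformly bounded there by $S_{\gamma,R}\norm{f}{H^\infty(E_{\gamma,r})}$, and converge pointwise (by the absolute convergence you already established). Vitali's convergence theorem (equivalently, Montel plus the identity theorem, or Osgood's theorem) then upgrades pointwise convergence of a locally uniformly bounded sequence to uniform convergence on compact subsets of $E_{\gamma,R}$, which is all that is needed for the limit to be holomorphic; the uniform bound on $|g|$ then gives $\tr f\in H^\infty(E_{\gamma,R})$ with the claimed norm estimate. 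In short: replace ``uniform convergence of the tails'' by ``uniform boundedness plus pointwise convergence plus Vitali,'' and your proof coincides with the paper's.
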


\begin{proof}
Without loss of generality we shall assume that $\I=\N$.
Fix $f\in H^\infty(E_{\gamma,r})$ and, for $k\in \N$, write
$g_k(z):=\sum_{i=1}^k W_i(z)f(\Phi_i(z))$. We clearly have
$g_k\in H^\infty(E_{\gamma,R})$. Since
\begin{equation}
\label{eq:trbdd}
|g_k(z)|\leq \sum_{i=1}^k |W_i(z)| |f(\Phi_i(z))|\leq
S_{\gamma,R}\norm{f}{H^\infty(E_{\gamma,r})}
\end{equation}
for all $z\in E_{\gamma,R}$, it follows that the sequence $( g_k )_{k\in \N}$ is uniformly
bounded on $E_{\gamma,R}$.  Moreover, the limit $\lim_{k\to\infty}g_k(z)=:g(z)$
exists for
every $z\in E_{\gamma,R}$.  By Vitali's convergence theorem (see, for example,
\cite[Proposition~7]{Nar}) the sequence $(g_k)_{k\in \N}$ thus
converges uniformly on compact subsets of $E_{\gamma,R}$.
Hence $g$ is analytic on $E_{\gamma,R}$.
Moreover, using (\ref{eq:trbdd}) it follows that
$|g(z)|\leq S_{\gamma,R} \|f\|_{H^\infty(E_{\gamma,r})}$
for any $z\in E_{\gamma,R}$.
Thus $\tr f=g\in H^\infty(E_{\gamma,R})$ and
$\norm{\tr f}{H^\infty(E_{\gamma, R})}\leq S_{\gamma,R}
\norm{f}{H^\infty(E_{\gamma, r})}$, as required.
\end{proof}

The lemma above implies that the transfer operator of a holomorphic
map-weight system is compact, when viewed as an operator from
$H^\infty(E_{\gamma,r})$ into itself. In order to see this, let
$\tilde{\tr}\colon H^\infty(E_{\gamma, r})\to H^\infty(E_{\gamma, R})$ denote the lifted transfer
operator which is bounded by Lemma~\ref{lem:Ltilde} and recall that the canonical
embedding $\hat{J}_\gamma \colon H^\infty(E_{\gamma,R})\to H^\infty(E_{\gamma,r})$ is
compact by Lemma~\ref{lem:Jhatgamma}. Thus $\tr=\hat{J}_\gamma\tilde{\tr}$
is a compact endomorphism of $H^\infty(E_{\gamma, r})$. The same argument shows
that $\tr=\tilde{\tr}\hat{J}_\gamma$ is a compact endomorphism of
$H^\infty(E_{\gamma, R})$. This factorisation argument is also at the heart of
the following theorem, our main result, which shows that
$\tr$ can be approximated at exponential speed using Lagrange--Chebyshev
projections.

\begin{theorem}\label{thm:ellipse}
Let $\tr$ be the transfer operator associated with a holomorphic
map-weight system and $(P_{\gamma, n})_{n\in\N}$ the sequence of
Lagrange--Chebyshev projections given in \eqref{eq:Pgn}. Then the following holds.
\begin{enumerate}[(i)]
\item
$\tr\colon H^\infty(E_{\gamma,r})\to H^\infty(E_{\gamma,r})$ is compact and
\[\norm{\tr - P_{\gamma,n} \tr}{\Hinf{E_{\gamma,r}}\to\Hinf{E_{\gamma,r}}} =
O\left(\left({\frac{r}{R}}\right)^n\right) \text{ as } n\to \infty\,. \]

\item
$\tr\colon H^\infty(E_{\gamma,R})\to H^\infty(E_{\gamma,R})$ is compact and
\[\norm{\tr - \tr P_{\gamma,n}}{\Hinf{E_{\gamma,R}}\to\Hinf{E_{\gamma,R}}} =
O\left(\left({\frac{r}{R}}\right)^n\right) \text{ as } n\to \infty\,. \]
\end{enumerate}
\end{theorem}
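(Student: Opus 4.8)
The plan is to reduce both statements to the interpolation error estimate of Lemma~\ref{lem:Jhatgamma} by exploiting the factorisations $\tr = \hat{J}_\gamma\tilde{\tr}$ and $\tr = \tilde{\tr}\hat{J}_\gamma$ already used above to establish compactness, where $\tilde{\tr}\colon \Hinf{E_{\gamma,r}}\to\Hinf{E_{\gamma,R}}$ is the lifted transfer operator of Lemma~\ref{lem:Ltilde} and $\hat{J}_\gamma\colon \Hinf{E_{\gamma,R}}\to\Hinf{E_{\gamma,r}}$ is the canonical embedding. Compactness of $\tr$ on either Hardy space is then immediate: $\hat{J}_\gamma$ is a norm limit of the finite-rank projections $P_{\gamma,n}$ by Lemma~\ref{lem:Jhatgamma} and hence compact, while $\tilde{\tr}$ is bounded by Lemma~\ref{lem:Ltilde}. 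Thus only the two operator-norm estimates require genuine work, and both will follow from submultiplicativity once the error operators are written in factored form.

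For part (i), the central observation is that $P_{\gamma,n}$ acts only through the values of a function at the (transformed) Chebyshev nodes, which lie in $E_{\gamma,r}\subset E_{\gamma,R}$; consequently applying $P_{\gamma,n}$ to $\tr f\in \Hinf{E_{\gamma,r}}$ produces the same polynomial as applying it to the lift $\tilde{\tr}f\in\Hinf{E_{\gamma,R}}$. In other words $P_{\gamma,n}$ factors through $\hat{J}_\gamma$, whence
\[
\tr - P_{\gamma,n}\tr = (\hat{J}_\gamma - P_{\gamma,n})\,\tilde{\tr},
\]
with $\hat{J}_\gamma - P_{\gamma,n}$ regarded as an operator $\Hinf{E_{\gamma,R}}\to\Hinf{E_{\gamma,r}}$. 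Submultiplicativity then yields
\[
\norm{\tr - P_{\gamma,n}\tr}{\Hinf{E_{\gamma,r}}\to\Hinf{E_{\gamma,r}}}
\leq \norm{\hat{J}_\gamma - P_{\gamma,n}}{\Hinf{E_{\gamma,R}}\to\Hinf{E_{\gamma,r}}}\,\norm{\tilde{\tr}}{\Hinf{E_{\gamma,r}}\to\Hinf{E_{\gamma,R}}},
\]
and I would bound the first factor by $c_{r,R}\cosh(n\log r)/\sinh(n\log R)$ via Lemma~\ref{lem:Jhatgamma} and the second by $S_{\gamma,R}$ via Lemma~\ref{lem:Ltilde}.

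Part (ii) is the mirror image: using $\tr = \tilde{\tr}\hat{J}_\gamma$ on $\Hinf{E_{\gamma,R}}$ and the same factoring of the interpolation through the embedding, I would write $\tr - \tr P_{\gamma,n} = \tilde{\tr}\,(\hat{J}_\gamma - P_{\gamma,n})$ and again invoke submultiplicativity together with Lemmas~\ref{lem:Ltilde} and~\ref{lem:Jhatgamma}. To convert the resulting bounds into the stated $O((r/R)^n)$ rate, I would use the elementary asymptotics $\cosh(n\log r)\sim \tfrac12 r^n$ and $\sinh(n\log R)\sim\tfrac12 R^n$ as $n\to\infty$, giving $c_{r,R}\cosh(n\log r)/\sinh(n\log R) = O((r/R)^n)$. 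The only genuinely delicate point—rather than the estimates themselves, which are routine once set up—is the bookkeeping in the factored identities: one must justify carefully that the interpolation projection applied after $\tr$ (and, for part (ii), the composite $\hat{J}_\gamma P_{\gamma,n}$) does not depend on the target space, which is exactly where the containment of the nodes in $E_{\gamma,r}$ is used.
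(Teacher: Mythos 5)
Your proposal is correct and follows essentially the same route as the paper: both use the factorisations $\tr = \hat{J}_\gamma\tilde{\tr}$ and $\tr = \tilde{\tr}\hat{J}_\gamma$, write the error as $(\hat{J}_\gamma - P_{\gamma,n})\tilde{\tr}$ (respectively $\tilde{\tr}(\hat{J}_\gamma - P_{\gamma,n})$), and conclude by submultiplicativity together with Lemmas~\ref{lem:Ltilde} and~\ref{lem:Jhatgamma}. The point you single out as delicate --- that $P_{\gamma,n}$ applied to $\tr f$ agrees with $P_{\gamma,n}$ applied to the lift because the interpolation nodes lie in $E_{\gamma,r}\subset E_{\gamma,R}$ --- is treated as immediate in the paper, but your attention to it is sound and does not change the argument.
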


\begin{proof}
Let $\tilde{\tr} \colon H^\infty(E_{\gamma, r})\to H^\infty(E_{\gamma, R})$ denote the lifted
transfer operator from Lemma \ref{lem:Ltilde} and
$\hat{J}_{\gamma}$ the canonical embedding operator from
$H^\infty(E_{\gamma,R})$ to $H^\infty(E_{\gamma,r})$.
For part (i) we use the factorisation $\tr = \hat{J}_{\gamma} \tilde{\tr}$.
Lemmas \ref{lem:Ltilde} and \ref{lem:Jhatgamma} imply that
$\tr\colon H^\infty(E_{\gamma,r})\to H^\infty(E_{\gamma,r})$ is compact; moreover
\begin{align*}
\norm{\tr - P_{\gamma,n}\tr }{\Hinf{E_{\gamma,r}}\to\Hinf{E_{\gamma,r}}} &=
\norm{\hat{J}_{\gamma}\tilde{\tr} - P_{\gamma,n}\tilde{\tr}}{\Hinf{E_{\gamma,r}}
\to\Hinf{E_{\gamma,r}}}\\
&\leq
\norm{\hat{J}_{\gamma} - P_{\gamma,n}}{\Hinf{E_{\gamma,R}}\to\Hinf{E_{\gamma,r}}}
\norm{\tilde{\tr}}{\Hinf{E_{\gamma,r}}\to\Hinf{E_{\gamma,R}}}
\end{align*}
and the remaining assertion follows. For the proof of part (ii) write
$\tr = \tilde{\tr} \hat{J}_{\gamma}$ and proceed as in part~(i).
\end{proof}
\begin{remark}
A quick glance at the proof shows that the implied constants
in the theorem above can be made explicit.
\end{remark}

\subsection{Transfer operators with holomorphic data on annular domains}
\label{trans_annulus}

In this subsection we shall state results analogous to those in the previous subsection, but now
for transfer operators considered on the space of bounded holomorphic functions on
annular domains
containing the unit circle. The main application we have in mind is
to generalised transfer operators associated with analytic expanding circle maps, that is,
maps $\tau\colon \T\to\T$ with $\inf_{z\in\T}|\tau'(z)| > 1$. Note that any such map
admits
analytic extensions to annuli $A_R$ for suitable $R>1$.

\begin{defn}
\label{def:Lannular}
Let $\tau\colon \T\to\T$ be an analytic expanding circle map. Suppose that there are
positive real numbers  $r$ and $R$  with $1 < r < R$
such that both $\tau$ and $1/\tau$ are holomorphic on $A_R$, and such that
 \[\tau(A_r) \supset \cl{A_R},  \]
where $\cl{\cdot}$ denotes the closure of a set of $\mathbb{C}$.
To any such map $\tau$ and a weight function $w\in  H^{\infty}(A_R)$ we associate the
transfer operator
$\tr_\T\colon L^1(\T)\to L^1(\T{})$ defined by
\begin{equation}\label{eq:L_circle}
(\tr_\T f)(z) =
\sum_{\zeta:\,\tau(\zeta) = z} \frac{w(\zeta)}{|\tau'(\zeta)|} f(\zeta),
\end{equation}
where the summation extends over the (finitely many) pre-images of the point $z$ under
$\tau$.
\end{defn}

We shall now show that restricted to $H^{\infty}(A_R)$, this operator is compact and can
be approximated at exponential speed using equidistant interpolation.
In analogy with Lemma \ref{lem:Ltilde}, we first show that
$\tr_\T$ lifts to a bounded operator from $H^\infty(A_r)$ to $H^\infty(A_R)$.

\begin{lemma}\label{lem:L_circle_tilde}
The operator $\tr_\T$ given in Definition~\ref{def:Lannular}
maps $H^\infty(A_r)$ continuously to $H^\infty(A_R)$. In particular, we have
\[\norm{\tr_\T}{\Hinf{A_r}\to\Hinf{A_R}} \leq \norm{w}{H^\infty(A_r)}
\left(\frac{r^{\omega_\tau}}{\delta_+(r^{\omega_\tau})} +
\frac{r^{-\omega_\tau}}{\delta_-(r^{-\omega_\tau})}\right),\]
where $\delta_+(r) = \min_{|z|=r}|\tau(z)| -  R$ and
$\delta_-(r) =1/R -\max_{|z|=r}|\tau(z)|$, with $\omega_\tau = 1$ for orientation
preseving and $\omega_\tau = -1$ for orientation reversing $\tau$.
\end{lemma}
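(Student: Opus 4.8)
The plan is to convert the defining sum over preimages in \eqref{eq:L_circle} into a contour integral and then invoke the residue calculus of Section~\ref{sec:circle_interpolation}, mirroring how the interpolation sums there were turned into the integrals \eqref{eq:fQNf}--\eqref{eq:QNf}; the resulting lemma plays the same role for Theorem~\ref{thm:circle} that Lemma~\ref{lem:Ltilde} plays for Theorem~\ref{thm:ellipse}. The one genuine difficulty is that the factor $1/\abs{\tau'(\zeta)}$ is \emph{not} holomorphic, so the summand is not manifestly the restriction of a holomorphic function. I would remove this obstruction on $\T$ first: writing $\zeta=e^{i\theta}$ and $\tau(e^{i\theta})=e^{i\psi(\theta)}$ gives $\tau'(\zeta)=\psi'(\theta)\tau(\zeta)/\zeta$, whence on $\T$
\[
\frac{1}{\abs{\tau'(\zeta)}}=\omega_\tau\,\frac{\tau(\zeta)}{\zeta\,\tau'(\zeta)},
\]
where $\omega_\tau=+1$ or $-1$ according to the sign of $\psi'$, i.e.\ according as $\tau$ is orientation preserving or reversing. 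Hence, for $z\in\T$,
\[
(\tr_\T f)(z)=\omega_\tau\sum_{\tau(\zeta)=z}\frac{w(\zeta)\,\tau(\zeta)\,f(\zeta)}{\zeta\,\tau'(\zeta)},
\]
and now every factor on the right extends holomorphically across $\T$, using that $\tau,1/\tau\in\Hinf{A_R}$ and that $\tau'\neq 0$ by expansion.

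Next I would realise this as a residue sum. The residue of $\zeta\mapsto \dfrac{w(\zeta)\tau(\zeta)f(\zeta)}{\zeta(\tau(\zeta)-z)}$ at a simple zero $\zeta_0$ of $\tau(\zeta)-z$ is $\dfrac{w(\zeta_0)\tau(\zeta_0)f(\zeta_0)}{\zeta_0\tau'(\zeta_0)}$, so, exactly as in the passage from \eqref{eq:lagrange_sum} to \eqref{eq:fQNf},
\[
(\tr_\T f)(z)=\frac{\omega_\tau}{2\pi i}\int_{\partial A_r}\frac{w(\zeta)\,\tau(\zeta)\,f(\zeta)}{\zeta(\tau(\zeta)-z)}\,d\zeta .
\]
Two points must be checked. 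First, for $z\in\T$ the covering hypothesis $\tau(A_r)\supset\cl{A_R}$ guarantees that all preimages of $z$ lie in $A_r$ and are enclosed by $\partial A_r$, while $\zeta=0$ sits in the bounded hole and is therefore \emph{not} enclosed; this is what makes the contour integral reproduce the full sum. Second, and this is where the numbers $\delta_\pm$ enter, the two boundary circles of $A_r$ are mapped by $\tau$ off $\cl{A_R}$, so $\tau(\zeta)\neq z$ for all $\zeta\in\partial A_r$ and all $z\in A_R$; consequently the integrand is holomorphic in $z$ on $A_R$, the integral defines a holomorphic function there, and since it agrees with $\tr_\T f$ on $\T$ it is, by the identity theorem, the desired holomorphic extension.

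For the bound I would split $\partial A_r$ into its outer and inner circles. On each circle $w$ is holomorphic up to the boundary (as $r<R$), so $\abs{w}\le\norm{w}{\Hinf{A_r}}$; the function $f$ is replaced by its nontangential boundary values $f^\ast$, which exist a.e.\ and satisfy $\abs{f^\ast}\le\norm{f}{\Hinf{A_r}}$ (Rudin, Theorem~17.11, precisely as invoked in the proof of Lemma~\ref{lem:J}); and the denominator is controlled from below by $\abs{\tau(\zeta)-z}\ge\delta_{\pm}(\cdot)$, the $+$ circle being the one on which $\tau$ leaves $\{\abs{u}\le R\}$ and the $-$ circle the one on which it enters $\{\abs{u}\le 1/R\}$. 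Carrying the arc-length factor through each of the two integrals, and bundling the sign $\omega_\tau$ with the orientation of $\partial A_r$, produces the two summands $r^{\omega_\tau}/\delta_+(r^{\omega_\tau})$ and $r^{-\omega_\tau}/\delta_-(r^{-\omega_\tau})$, with the common prefactor $\norm{w}{\Hinf{A_r}}$.

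I expect the main obstacle to be the \emph{holomorphic extension of the non-holomorphic datum} $1/\abs{\tau'}$, i.e.\ establishing and correctly signing the identity above, together with the orientation bookkeeping that follows from it: the single parameter $\omega_\tau$ must simultaneously fix the sign in the integrand, the orientation of the two circles, and which of $\delta_+,\delta_-$ governs which circle (equivalently, the arguments $r^{\pm\omega_\tau}$). The remaining ingredients—enclosing all preimages while excluding $\zeta=0$, and passing to nontangential boundary values—are routine once the representation is in place, the former being a degree/covering argument and the latter a verbatim repetition of the device used in Lemma~\ref{lem:J}.
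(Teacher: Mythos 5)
Your route is genuinely different from the paper's. The paper writes $\tr_\T$ as a finite sum over local inverse branches and then, following the proof of Lemma~2.3 of \cite{SBJNonlin}, estimates the Fourier coefficients $c_n(\tr_\T f)$ by deforming the contour of $\frac{\omega_\tau}{2\pi i}\int_\T w f\,\tau^{-(n+1)}\,dz$ to $\abs{z}=r^{\omega_\tau}$ for $n\ge 0$ and to $\abs{z}=r^{-\omega_\tau}$ for $n\le -1$; the hypotheses $\delta_\pm(r^{\pm\omega_\tau})>0$ give geometric decay of the coefficients at a rate beating $R^{\pm n}$, whence membership in $\Hinf{A_R}$ and, after summing the geometric series, the stated constant. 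You instead build a single global contour-integral representation of $\tr_\T f$ over $\partial A_r$ and read off holomorphy on $A_R$ directly. Your preparatory steps are sound, and in places more careful than the paper's: the identity $1/\abs{\tau'(\zeta)}=\omega_\tau\,\tau(\zeta)/(\zeta\,\tau'(\zeta))$ on $\T$ is the correct holomorphic surrogate for the non-holomorphic weight; the degree count showing that all preimages of $z\in\T$ inside $A_r$ lie on $\T$ (and that $\zeta=0$ is excluded) is exactly the covering argument needed to equate the residue sum with the defining sum; and the lower bounds $\abs{\tau(\zeta)-z}>\delta_\pm$ on the two boundary circles are what make the integral holomorphic in $z$ on all of $A_R$. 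This establishes the qualitative conclusion --- continuity of $\tr_\T\colon\Hinf{A_r}\to\Hinf{A_R}$ --- which is all that Theorem~\ref{thm:circle} actually uses.

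The gap is in the final estimate. Your integrand carries $\tau(\zeta)$ in the numerator and $\zeta$ in the denominator; after multiplying by the arc length $2\pi r^{\pm\omega_\tau}$, it is the factor $1/\abs{\zeta}$ that cancels, and what survives on the ``$+$'' circle is $\sup_{\abs{\zeta}=r^{\omega_\tau}}\abs{\tau(\zeta)}/\delta_+(r^{\omega_\tau})$ (or, estimating $\abs{\tau}/\abs{\tau-z}\le (R+\delta_+)/\delta_+$, the constant $\min_{\abs{\zeta}=r^{\omega_\tau}}\abs{\tau(\zeta)}/\delta_+(r^{\omega_\tau})$), not $r^{\omega_\tau}/\delta_+(r^{\omega_\tau})$. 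Since $\min_{\abs{\zeta}=r^{\omega_\tau}}\abs{\tau(\zeta)}=R+\delta_+(r^{\omega_\tau})>R>r^{\omega_\tau}$, this is strictly larger than the summand claimed in the lemma: the bookkeeping you describe as ``carrying the arc-length factor through'' silently drops $\abs{\tau(\zeta)}$. To extract a constant of the advertised form from your representation you would need to expand the kernel $1/(\tau(\zeta)-z)$ as a geometric series in $z/\tau(\zeta)$ (respectively $\tau(\zeta)/z$) on the two circles and sum term by term --- which is precisely the Fourier-coefficient computation the paper performs.
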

\begin{proof}
As $\tau$ is expanding, it is a $K$-fold covering for some $K>1$, and $|\tau'| =
\omega_\tau \tau'$.
Let $\phi_i$ denote the $i$-th local inverse branch of $\tau$, then $\tr_\T$ can
be written as
\[\tr_\T f = \omega_\tau\sum_{i=1}^{K} (w \circ \phi_i) \cdot \phi_i' \cdot (f \circ \phi_i).\]
Given $f\in H^\infty(A_r)$ with $\norm{f}{H^\infty(A_r)} \leq 1$, we want to show
that $\tr_\T f \in H^{\infty}(A_R)$. We follow the proof of \cite[Lemma 2.3]{SBJNonlin} by
estimating the asymptotic behaviour of Fourier coefficients of
$\tr_\T f$.
Using change of variables we can express the $n$-th Fourier coefficient
of $\tr_\T f$  as
\[c_n(\tr_\T f) = \frac{\omega_\tau}{2\pi i}\int_\T \frac{w(z) f(z)}{\tau(z)^{n+1}} \, dz.\]
Fix $n \geq 0$. By \cite[Theorem 17.11]{Rudin}, the (nontangential)
limit $f^*(z)$ for $z \in \partial A_r$ exists a.e.\ and $f^*$ is integrable on $\partial A_r$.
Moreover, as $z\mapsto \frac{w(z)}{\tau(z)^{n+1}}$ is holomorphic on $\cl{A_r}$ we may
deform the contour to obtain
\[|c_n(\tr_\T f)| = \left|\frac{\omega_\tau}{2\pi i}\int_{|z|=r^{\omega_\tau}}
\frac{w(z) f^\ast(z)}{\tau(z)^{n+1}} \, dz\right| \leq S_r
\frac{r^{\omega_\tau}}{\left(\inf_{|z|=r^{\omega_\tau}} |\tau(z)|\right)^{n+1}} =
S_r\frac{r^{\omega_\tau}}{\left(R+\delta_+(r^{\omega_\tau})\right)^{n+1}},
\]
where $S_r =\norm{w}{H^\infty(A_r)}$.
Similarly, for $n\geq 1$, we have
\[|c_{-n}(\tr_\T f)| \leq S_r r^{-{\omega_\tau}} \left(R^{-1} - \delta_-(r^{-{\omega_\tau}})
\right)^{n-1}.\]
As $\tau$ is holomorphic on $A_R$, it follows by the Open Mapping Theorem that
$\tau(A_r)$ is open.
Thus the condition $\tau(A_r) \supset \cl{A_R}$ implies $\delta_+(r^{\omega_\tau}) > 0$
and $\delta_-(r^{-\omega_\tau}) > 0$.
The same arguments as in the proof of \cite[Lemma 2.3]{SBJNonlin} now yield
$\tr_\T f \in H^\infty(A_R)$ as well as the claimed upper bound for the norm.
\end{proof}

Using the same factorisation argument already employed
in Section~\ref{sec:trans_ellipse},
we obtain the following analogue of Theorem~\ref{thm:ellipse}
for transfer operators on annular domains.

\begin{theorem}\label{thm:circle}
Let $\tr_\T$ be the transfer operator given in Definition~\ref{def:Lannular} and let
$(Q_{2n})_{n\in\mathbb{N}}$ be the sequence of equidistant Lagrange interpolation
projections given in \eqref{eq:Q2nf}. Then the following holds.
\begin{enumerate}[(i)]
\item $\tr_\T\colon H^{\infty}(A_r) \to H^{\infty}(A_r)$ is compact and
\[\norm{\tr_\T - Q_{2n} \tr_\T}{\Hinf{A_r}\to\Hinf{A_r}} =
O\left(\left({\frac{r}{R}}\right)^n\right) \text{ as } n\to \infty. \]
\item $\tr_\T\colon H^{\infty}(A_R) \to H^{\infty}(A_R)$ is compact and
\[\norm{\tr_\T - \tr_\T Q_{2n}}{\Hinf{A_R}\to\Hinf{A_R}} =
O\left(\left({\frac{r}{R}}\right)^n\right) \text{ as } n\to \infty. \]
\end{enumerate}
\end{theorem}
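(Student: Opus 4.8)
The plan is to mirror, line for line, the factorisation argument used to prove Theorem \ref{thm:ellipse}, replacing each elliptic ingredient by its annular counterpart: Lemma \ref{lem:L_circle_tilde} takes over the role of Lemma \ref{lem:Ltilde}, and Lemma \ref{lem:J} that of Lemma \ref{lem:Jhatgamma}. Concretely, I would first name the lifted transfer operator $\tilde{\tr}_\T\colon \Hinf{A_r}\to\Hinf{A_R}$ supplied by Lemma \ref{lem:L_circle_tilde}, which is bounded with the explicit norm bound stated there, and recall the canonical embedding $J\colon \Hinf{A_R}\to\Hinf{A_r}$. By the remark following Lemma \ref{lem:J}, $J$ is the operator-norm limit of the finite-rank projections $Q_{2n}$, hence compact, and $\norm{J - Q_{2n}}{\Hinf{A_R}\to\Hinf{A_r}} = O((r/R)^n)$.

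For part (i) I would use the factorisation $\tr_\T = J\,\tilde{\tr}_\T$ as an endomorphism of $\Hinf{A_r}$; compactness of $\tr_\T$ is then immediate as the composition of the bounded operator $\tilde{\tr}_\T$ with the compact embedding $J$. For the convergence rate, writing $\tr_\T - Q_{2n}\tr_\T = (J - Q_{2n})\tilde{\tr}_\T$ gives
\[
\norm{\tr_\T - Q_{2n}\tr_\T}{\Hinf{A_r}\to\Hinf{A_r}} \leq \norm{J - Q_{2n}}{\Hinf{A_R}\to\Hinf{A_r}}\,\norm{\tilde{\tr}_\T}{\Hinf{A_r}\to\Hinf{A_R}},
\]
and the right-hand side is $O((r/R)^n)$, the first factor decaying by Lemma \ref{lem:J} and the second being a fixed finite constant.

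For part (ii) I would instead factorise $\tr_\T = \tilde{\tr}_\T\,J$ on $\Hinf{A_R}$, again obtaining compactness as bounded composed with compact. Since for $f\in\Hinf{A_R}$ the interpolant $Q_{2n}f$ is a Laurent polynomial and therefore lies in $\Hinf{A_r}$, one has $\tr_\T - \tr_\T Q_{2n} = \tilde{\tr}_\T(J - Q_{2n})$, and the identical estimate applies with the two factors in the opposite order. The only point needing care --- and it is purely bookkeeping, not analysis --- is that $J$ and $Q_{2n}$ must throughout be read as operators $\Hinf{A_R}\to\Hinf{A_r}$, so that post-composing with $\tilde{\tr}_\T\colon\Hinf{A_r}\to\Hinf{A_R}$ returns to the correct space. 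There is no genuine obstacle at this stage: all the substantive content already resides in Lemma \ref{lem:L_circle_tilde} (the Fourier-coefficient decay encoding the expanding property $\tau(A_r)\supset\cl{A_R}$) and in the Hermite/residue estimate of Lemma \ref{lem:J}.
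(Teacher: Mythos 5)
Your proposal is correct and follows essentially the same route as the paper: the paper proves Theorem~\ref{thm:circle} by invoking verbatim the factorisation argument of Theorem~\ref{thm:ellipse}, with Lemma~\ref{lem:L_circle_tilde} supplying the lifted operator $\tilde{\tr}_\T\colon \Hinf{A_r}\to\Hinf{A_R}$ and Lemma~\ref{lem:J} supplying compactness of the embedding $J$ and the $O((r/R)^n)$ decay of $\norm{J-Q_{2n}}{\Hinf{A_R}\to\Hinf{A_r}}$. Your identities $\tr_\T - Q_{2n}\tr_\T = (J-Q_{2n})\tilde{\tr}_\T$ and $\tr_\T - \tr_\T Q_{2n} = \tilde{\tr}_\T(J-Q_{2n})$, together with the submultiplicativity of the operator norm, are exactly the intended argument.
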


\begin{remark}
As before,
the implied constants in the theorem above can be made explicit.
\end{remark}

\subsection{Convergence of spectral data}
\label{sec:conv}
Theorems \ref{thm:ellipse} and \ref{thm:circle} together with standard results from spectral perturbation theory now imply the desired convergence of spectral data
of $P_{\gamma,n}\tr $ to spectral data of $\tr$, and similarly,
the convergence of spectral data of $Q_{2n}\tr_\T $ to spectral data of $\tr_\T$.
In order to avoid repetition, we shall state all results simultaneously
for both Lagrangre--Chebyshev and equidistant Lagrange interpolation.
For the remainder of this subsection, we write $U_\rho$, $P_n$, $\tr$ for either
$E_{\gamma,\rho}$, $P_{\gamma,n}$, $\tr$ (as in Theorem~\ref{thm:ellipse}) or
$A_{\rho}$, $Q_{2n}$, $\tr_\T$ (as in Theorem~\ref{thm:circle}), respectively, where in both cases $\rho$ may be either $r$ or $R$.

\begin{cor}\label{cor:conv}
Let $\tr\colon H^\infty(U_{r})\to H^\infty(U_{r})$ denote the transfer operator
as in Theorem~\ref{thm:ellipse} or  Theorem~\ref{thm:circle}
and let $\tr_n = P_{n}\tr$. Then the following holds.
\begin{enumerate}[(i)]
\item Any convergent sequence
$(\mu_n)_{n\in\mathbb{N}}$ with $\mu_n \in \operatorname{spec}(\tr_n)$
converges to a spectral point of $\tr$.

\item Conversely, for any $\mu \in \operatorname{spec}(\tr)$,
there exists a sequence $(\mu_n)_{n\in\mathbb{N}}$ with
$\mu_n \in \operatorname{spec}(\tr_n)$,
such that $\mu_n \to \mu$ as $n \to \infty$.
More precisely, if $\mu$ is an eigenvalue with
ascent\footnote{An eigenvalue $\mu$ of an
operator $T$ is said to have \emph{ascent} $\ell$,
if $\ell$ is the smallest integer such that the kernel of $(\mu I-T)^\ell$ equals
  that of $(\mu I-T)^{\ell+1}$. In particular, if $\mu$ is algebraically simple,
  then $\ell = 1$.} $\ell$,
we have
\[|\mu - \mu_n| =
O\left(\left(\frac{r}{R}\right)^{n/\ell}\right) \text{ as } n\to \infty.\]
\item
Let $\mu \in \operatorname{spec}(\tr)\setminus \{0\}$ and let
$(\mu_n)_{n\in\mathbb{N}}$ be a sequence with
$\mu_n \in \operatorname{spec}(\tr_n)$
such that $\mu_n \to \mu$ as $n \to \infty$.
Writing $\mathcal{P}$ for the spectral projection associated with the eigenvalue
$\mu$ of $\tr$ and $(h_n)_{n\in \N}$ for a sequence of
generalised eigenvectors associated with the eigenvalue $\mu_n$ of $\tr_n$,
normalised so that $\norm{h_n}{H^\infty(U_{r})}=1$, we have
\[\| \mathcal{P}h_n - h_n\|_{\Hinf{U_{r}}} = O\left(\left(\frac{r}{R}\right)^n\right) \text{ as } n\to \infty.\]
\end{enumerate}
\end{cor}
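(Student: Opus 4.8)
The plan is to deduce all three statements from the single operator-norm estimate $\|\tr-\tr_n\|=O((r/R)^n)$, which is exactly part~(i) of Theorem~\ref{thm:ellipse} (respectively Theorem~\ref{thm:circle}) once we recall $\tr_n=P_n\tr$, together with standard Riesz-projection perturbation theory. Throughout I write $E_n=\tr_n-\tr$, so that $\|E_n\|=O((r/R)^n)$, and I use that $\tr$ is compact while each $\tr_n=P_n\tr$ has finite rank; hence every nonzero point of $\operatorname{spec}(\tr)$ is an isolated eigenvalue of finite algebraic multiplicity, and likewise for $\tr_n$.

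For part~(i) I would invoke stability of invertibility under small norm perturbations. If $\mu\notin\operatorname{spec}(\tr)$, then $(\mu-\tr)^{-1}$ is bounded and the resolvent set is open, so there is a closed disc $\bar D$ around $\mu$ with $M:=\sup_{\lambda\in\bar D}\|(\lambda-\tr)^{-1}\|<\infty$. Writing $\lambda-\tr_n=(\lambda-\tr)(I-(\lambda-\tr)^{-1}E_n)$ and choosing $n$ so large that $M\|E_n\|<1$, a Neumann series shows $(\lambda-\tr_n)^{-1}$ exists for every $\lambda\in\bar D$. Consequently, if $\mu_n\to\mu$ with $\mu_n\in\operatorname{spec}(\tr_n)$, then $\mu$ cannot lie in the resolvent set of $\tr$, since otherwise eventually $\mu_n\in\bar D\subseteq\operatorname{res}(\tr_n)$, contradicting $\mu_n\in\operatorname{spec}(\tr_n)$; this is (i).

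For part~(ii), fix $\mu\in\operatorname{spec}(\tr)\setminus\{0\}$ of algebraic multiplicity $m$ and ascent $\ell$, and choose a small positively oriented circle $\Gamma$ enclosing $\mu$ and no other point of $\operatorname{spec}(\tr)$. Set $\mathcal P=\frac{1}{2\pi i}\int_{\Gamma}(\lambda-\tr)^{-1}\,d\lambda$ and, for $n$ large (so $\Gamma\subseteq\operatorname{res}(\tr_n)$ by the argument in (i)), $\mathcal P_n=\frac{1}{2\pi i}\int_{\Gamma}(\lambda-\tr_n)^{-1}\,d\lambda$. The resolvent identity $(\lambda-\tr)^{-1}-(\lambda-\tr_n)^{-1}=-(\lambda-\tr)^{-1}E_n(\lambda-\tr_n)^{-1}$, integrated over $\Gamma$ where both resolvents are uniformly bounded, yields $\|\mathcal P-\mathcal P_n\|=O(\|E_n\|)=O((r/R)^n)$; since $\|\mathcal P-\mathcal P_n\|<1$ forces equality of ranks, $\rank\mathcal P_n=m\ge1$ for large $n$, so $\tr_n$ has at least one eigenvalue inside $\Gamma$, giving the existence claim. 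For the rate I would use the Laurent expansion of the resolvent at the isolated eigenvalue $\mu$: its pole order equals the ascent $\ell$, so $\|(\lambda-\tr)^{-1}\|\le C|\lambda-\mu|^{-\ell}$ for $\lambda$ inside $\Gamma$. Hence $(\lambda-\tr_n)^{-1}$ exists whenever $C|\lambda-\mu|^{-\ell}\|E_n\|<1$, i.e. whenever $|\lambda-\mu|>(C\|E_n\|)^{1/\ell}$; every eigenvalue of $\tr_n$ inside $\Gamma$ must therefore lie within $(C\|E_n\|)^{1/\ell}=O((r/R)^{n/\ell})$ of $\mu$, which is the asserted bound.

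Finally, for part~(iii), when $\mu_n\to\mu$ the eigenvalue $\mu_n$ lies inside $\Gamma$ for large $n$, so its generalised eigenspace is contained in $\operatorname{ran}\mathcal P_n$ and thus $\mathcal P_n h_n=h_n$. Then $\mathcal P h_n-h_n=(\mathcal P-\mathcal P_n)h_n$, whence $\|\mathcal P h_n-h_n\|_{\Hinf{U_r}}\le\|\mathcal P-\mathcal P_n\|=O((r/R)^n)$ by the projection estimate already obtained in (ii); note this full rate (undegraded by $\ell$) is available precisely because $\Gamma$ is a fixed contour on which the resolvents stay uniformly bounded. The main obstacle is the sharp rate in part~(ii): the essential point is to identify the order of the pole of the resolvent at $\mu$ with the ascent $\ell$ and to extract from this the fractional exponent $1/\ell$, which reflects the possible splitting of a defective eigenvalue under perturbation. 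The remaining parts are routine once the estimate $\|E_n\|=O((r/R)^n)$ and the uniform boundedness of the resolvents on $\Gamma$ are in hand.
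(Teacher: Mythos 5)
Your proposal is correct, and it supplies in full the standard spectral perturbation arguments that the paper's proof simply outsources: the paper derives (i) and (ii) from Properties U and L in \cite{Ahues} (Corollaries 2.7 and 2.13 there), the rate in (ii) from Theorems 2.17--2.18 of that reference, and (iii) from its Proposition 2.9, all fed by the single norm estimate $\norm{\tr-\tr_n}{}=O\left((r/R)^n\right)$ from part (i) of Theorem \ref{thm:ellipse} (resp.\ Theorem \ref{thm:circle}). Your Neumann-series argument for (i), the Riesz-projection estimate $\|\mathcal{P}-\mathcal{P}_n\|=O(\|E_n\|)$ with its rank-equality consequence for the existence claim in (ii), and the identification of the pole order of the resolvent at $\mu$ with the ascent $\ell$ to extract the exponent $n/\ell$, are precisely the content of those cited results, so the two proofs coincide in substance; yours is self-contained where the paper's is a citation. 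Two minor loose ends, neither fatal: in (ii) the case $\mu=0$ (which belongs to $\operatorname{spec}(\tr)$ because $\tr$ is compact on an infinite-dimensional space) needs the separate one-line remark that $0\in\operatorname{spec}(\tr_n)$ for every $n$ since $\tr_n$ has finite rank; and in (iii) the relation $\mathcal{P}_nh_n=h_n$ uses that $h_n$ lies in the range of the Riesz projection of $\tr_n$ over the whole interior of $\Gamma$ (which may a priori enclose several eigenvalues of $\tr_n$), a point you state correctly and which is all that is needed.
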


\begin{proof}
Statements (i) and (ii) are known as Properties U and L, respectively, and follow from
\cite[Corollaries 2.7, 2.13]{Ahues}, with the bound on the convergence rate following by
combining our Theorem \ref{thm:ellipse} (or Theorem \ref{thm:circle}, respectively) with Theorems 2.17, 2.18, and ensuing remarks
in \cite{Ahues}. Part (i) of Theorem \ref{thm:ellipse} (or Theorem \ref{thm:circle}, respectively)
and \cite[Proposition 2.9]{Ahues} finally yield
statement (iii).
\end{proof}

\begin{remark}
Analogous spectral approximation results hold for $\tr\colon H^\infty(U_{R}) \to
H^\infty(U_{R})$ and
$\tr_n=\tr P_{n}$. However, these are less important from a practical perspective. The
reason for this is that, while the non-zero eigenvalues of $P_{n}\tr$
and $\tr P_{n}$ coincide, this is not the case for the corresponding generalised
eigenspaces, and, as we shall see in Section~\ref{sec:applications}, the generalised
eigenvectors of $P_{n}\tr $ are easier to calculate than those of
$\tr P_n$.
\end{remark}

The corollary above provides estimates for the speed of convergence of the
generalised eigenvectors of $P_{n}\tr$ to the corresponding generalised eigenspace of $\tr$.
Similar results can
be obtained for the corresponding eigenfunctionals. In order to see this, let
$H^\infty(U_{R})^\ast$ denote the dual space of
$H^\infty(U_{R})$ equipped
with the usual strong dual topology turning $H^\infty(U_{R})^\ast$ into a
Banach space.

Part (ii) of Theorem~\ref{thm:ellipse} (or Theorem~\ref{thm:circle}, respectively)
implies that
\[\norm{\tr^\ast  - P_{n}^\ast \tr^\ast}{\Hinf{U_{R}}^\ast
\to\Hinf{U_{R}}^\ast} =
O\left(\left({\frac{r}{R}}\right)^n\right) \text{ as } n\to \infty, \]
that is, the adjoint approximant $P_{n}^\ast\tr^\ast$ converges to the
adjoint
$\tr^\ast$ of $\tr$ in the operator norm on $\Hinf{U_{R}}^\ast$, which in turn
implies convergence of the corresponding generalised eigenspaces. More precisely, we
have the following dual analogue of part (iii) of Corollary~\ref{cor:conv}.

\begin{cor}\label{cor:conv2}
Let $\tr^\ast \colon H^\infty(U_{R})^\ast\to H^\infty(U_{R})^\ast$
denote the adjoint transfer operator
as in Theorem~\ref{thm:ellipse} or Theorem~\ref{thm:circle}, and
let $\tr_n^\ast =P_{n}^\ast \tr^\ast$.
Suppose that $\mu \in \operatorname{spec}(\tr^\ast)\setminus \{0\}$ and that
$(\mu_n)_{n\in\mathbb{N}}$ is a sequence with
$\mu_n \in \operatorname{spec}(\tr_n^\ast)$
such that $\mu_n \to \mu$ as $n \to \infty$.
Writing $\mathcal{P^\ast}$ for the spectral projection associated with the eigenvalue
$\mu$ of $\tr^\ast$ and $(h_n^\ast)_{n\in \N}$ for a sequence of
generalised eigenvectors associated with the eigenvalue $\mu_n$ of $\tr_n^\ast$,
normalised so that $\norm{h_n^\ast}{H^\infty(U_{R})^\ast}=1$, we have
\[\| \mathcal{P^\ast}h_n^\ast - h_n^\ast\|_{\Hinf{U_{R}}^\ast}
= O\left(\left(\frac{r}{R}\right)^n\right) \text{ as } n\to \infty.\]
\end{cor}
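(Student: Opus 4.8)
The plan is to recognise Corollary~\ref{cor:conv2} as the exact dual counterpart of part~(iii) of Corollary~\ref{cor:conv}, so that the argument reduces to transporting the hypotheses of the abstract perturbation result \cite[Proposition 2.9]{Ahues} to the dual Banach space $\Hinf{U_R}^\ast$. The decisive observation is that this proposition applies to an arbitrary compact operator together with a sequence of operators converging to it in operator norm; once I verify that $\tr^\ast$ and $(\tr_n^\ast)_{n\in\N}$ meet these requirements on $\Hinf{U_R}^\ast$, the desired estimate follows with essentially no further work.

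First I would record the operator-norm convergence on the dual, which has in fact already been derived in the paragraph preceding the statement:
\[
\norm{\tr^\ast - \tr_n^\ast}{\Hinf{U_R}^\ast\to\Hinf{U_R}^\ast}
= \norm{\tr^\ast - P_n^\ast\tr^\ast}{\Hinf{U_R}^\ast\to\Hinf{U_R}^\ast}
= O\!\left(\left(\frac{r}{R}\right)^n\right).
\]
This is immediate from part~(ii) of Theorem~\ref{thm:ellipse} (or Theorem~\ref{thm:circle}) upon taking adjoints, using the identity $\norm{A^\ast}{}=\norm{A}{}$ valid for any bounded operator $A$ between Banach spaces together with $(\tr P_n)^\ast = P_n^\ast\tr^\ast = \tr_n^\ast$. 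It plays exactly the role that the operator-norm estimate of Theorem~\ref{thm:ellipse}(i) played in the proof of Corollary~\ref{cor:conv}(iii).

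Next I would verify the compactness and spectral structure required by \cite[Proposition 2.9]{Ahues}. Part~(ii) of Theorem~\ref{thm:ellipse} (or Theorem~\ref{thm:circle}) shows that $\tr$ is compact on $\Hinf{U_R}$, and Schauder's theorem then yields compactness of $\tr^\ast$ on $\Hinf{U_R}^\ast$; likewise each $\tr_n^\ast = P_n^\ast\tr^\ast$ is of finite rank and hence compact. Consequently every $\mu\in\operatorname{spec}(\tr^\ast)\setminus\{0\}$ is an isolated eigenvalue of finite algebraic multiplicity, so the spectral projection $\mathcal{P}^\ast$ is a well-defined finite-rank operator and the generalised eigenvectors $h_n^\ast$ lie in the finite-dimensional spectral subspaces to which the proposition refers.

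With both ingredients in place, the proof concludes by applying \cite[Proposition 2.9]{Ahues} with the substitutions $\tr\mapsto\tr^\ast$, $\tr_n\mapsto\tr_n^\ast$ and $\Hinf{U_r}\mapsto\Hinf{U_R}^\ast$: the proposition bounds $\norm{\mathcal{P}^\ast h_n^\ast - h_n^\ast}{\Hinf{U_R}^\ast}$ by a constant multiple of $\norm{\tr^\ast - \tr_n^\ast}{}$, which the first step identifies as $O((r/R)^n)$. I expect the only genuine point requiring care to be confirming that the normalisation $\norm{h_n^\ast}{\Hinf{U_R}^\ast}=1$ and the convergence $\mu_n\to\mu$ match the precise hypotheses on the approximating eigendata in \cite[Proposition 2.9]{Ahues}; but since $\mu\neq0$ forces the relevant eigenspaces to be finite-dimensional and $\tr_n^\ast$ inherits compactness exactly as in the primal case, this verification is entirely routine.
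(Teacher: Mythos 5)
Your proposal is correct and follows essentially the same route as the paper: the paper likewise obtains $\norm{\tr^\ast - P_n^\ast\tr^\ast}{\Hinf{U_R}^\ast\to\Hinf{U_R}^\ast} = O((r/R)^n)$ by dualising part~(ii) of Theorem~\ref{thm:ellipse} (or Theorem~\ref{thm:circle}) via $(\tr P_n)^\ast = P_n^\ast\tr^\ast$, and then invokes the same spectral perturbation machinery from \cite{Ahues} used for part~(iii) of Corollary~\ref{cor:conv}. Your additional remarks on Schauder's theorem and the finite rank of $\tr_n^\ast$ are consistent with, and merely make explicit, what the paper leaves implicit.
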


\section{Applications}\label{sec:applications}
In this section we apply the Lagrange approximation algorithm to several approximation
problems involving transfer operators and compare its performance with algorithms
available in the literature.

\subsection{Spectral data of Lagrange approximants}
Before going into details we briefly review how to calculate eigendata of
finite-rank approximants obtained by applying Lagrange projections to the transfer
operator of a holomorphic map-weight system. In order to avoid cluttered notation,
we write $X$ for any of the underlying Banach spaces discussed in the previous section,
$L \colon X\to X$ for the transfer operator arising from a holomorphic map-weight system, which
we merely need to assume bounded for the purpose of this discussion, and
$P \colon X\to X$  for a bounded projection of rank $n$, given by
\[ Pf=\sum_{l=1}^ne_l^\ast(f)e_l \qquad (f\in X),\]
where $e_l\in X$ and $e^\ast_l\in X^\ast$ for $l=1,\ldots, n$. Here $P$
may be thought of as any of the Lagrange projections given in
Equations~(\ref{eq:Q2nf_practical}), (\ref{eq:Pn_practical}) or
(\ref{eq:Pgn}).  Given $L$ and $P$ as above, define an $n\times n$ matrix $M$ by
\begin{equation}\label{eq:Mkl}
M_{kl} = e_k^\ast(Le_l) \qquad (k,l\in \{1,\ldots, n\}).
\end{equation}
It is not difficult to see, for example, by appealing to the Principle of Related Operators
(see \cite[Section~3.3]{Pietsch}) that the operators $PL$, $LP$, $PLP$ and $M$
have the same non-zero eigenvalues with the same algebraic multiplicities; in particular,
\[ \operatorname{spec}(PL)\setminus\{0\} =
\operatorname{spec}(LP)\setminus\{0\} =
\operatorname{spec}(PLP)\setminus\{0\} =
\operatorname{spec}(M)\setminus\{0\}.
\]
As for the corresponding generalised eigenvectors and eigenfunctionals, a short
calculation shows that for any non-zero $\mu \in \mathbb{C}$ and any $k\in \N$ we have
\begin{equation}
\label{eq:phiinker}
 h \in \ker ((\mu I - PLP)^k) \text{ iff }
 h=\sum_{l=1}^nx_le_l \text{ with } x \in \ker ((\mu I - M)^k),
\end{equation}
\begin{equation}
\label{eq:phistarinker}
h^\ast \in \ker ((\mu I - P^\ast L^\ast P^\ast)^k) \text{ iff }
h^\ast =\sum_{l=1}^nx_le_l^\ast \text{ with } x \in \ker ((\mu I - M^T)^k).
\end{equation}
Thus the generalised eigenvectors and eigenfunctionals of $PLP$ can be obtained from
the generalised right and left eigenvectors of $M$. Finally, we note that for any non-zero
$\mu \in \mathbb{C}$ and any $k\in \mathbb{N}$ we have
\begin{equation}
\label{eq:kerPLP}
\ker ((\mu I - PLP)^k) = \ker ((\mu I - PL)^k),
\end{equation}
so that the spectral convergence results contained in Corollaries~\ref{cor:conv} and
\ref{cor:conv2} can be applied to the
approximate eigendata obtained through (\ref{eq:phiinker}) and (\ref{eq:phistarinker}).

The proof of (\ref{eq:kerPLP}) is straightforward: given $h\in \ker(\mu I - PLP)$ we
have $\mu h = PLPh$, so $h$ must be in the range of $P$, hence $Ph=h$,
from which $\mu h = PLh$, that is, $h \in \ker (\mu I - PL)$. For the converse,
suppose that $h \in \ker (\mu I - PL)$, so $h$ must again be in the range of $P$,
from which $Ph=h$, and so $h \in \ker (\mu I - PLP)$. The proof of the general case with
$k>1$ is similar.

For the sake of completeness, we present a simple algorithm for the computation of matrix elements $M_{kl}$
in \eqref{eq:Mkl} for $L$ a transfer operator associated with a map-weight system holomorphic on standard ellipses, that is $\gamma = (-1, 1)$, and $P$ the Lagrange--Chebyshev interpolation operator in \eqref{eq:Pn_practical}.

\begin{algorithm}[H]\label{algo:lagcheb}
\caption{Lagrange--Chebyshev approximation of a transfer operator}
\begin{algorithmic}
\REQUIRE $n$, $\{W_j\}_{j=0,\ldots, d-1}, \{\Phi_j\}_{j=0,\ldots, d-1}$
\STATE
\COMMENT{Evaluation of data at interpolation points}
\FOR{$m=0$ to $n-1$}
\STATE $x_m \leftarrow \cos{\frac{\pi (2m+1)}{2n}}$
  \FOR{$j=0$ to $d-1$}
  \STATE $w_{jm} \leftarrow W_j(x_m)$
  \STATE $\phi_{jm} \leftarrow \Phi_j(x_m)$
  \ENDFOR
\ENDFOR\\

\STATE
\COMMENT{Computation of matrix entries}
\FOR{$k=0$ to $n-1$}
  \FOR{$l=0$ to $n-1$}
  \STATE $M_{kl} \leftarrow
  \frac{2-\delta_{0,k}}{n}\sum_{m=0}^{n-1} T_k(x_m) \sum_{j=0}^{d-1} w_{jm} T_l(\phi_{jm})$
  \ENDFOR
\ENDFOR
\RETURN M
\end{algorithmic}
\end{algorithm}

\begin{remark}
It it not difficult to see that $M = AB$ with
$A_{kl} = \frac{2-\delta_{0,k}}{n} T_k(x_l)$ and $B_{kl} = (L T_l)(x_k)$ for
$k,l=0,\ldots, n-1$, which yields
a more efficient way to compute $M$ in the algorithm above.
\end{remark}

\subsection{Decay of correlations}
In \cite{BB}, Bahsoun and Bose consider discretisation schemes based on piecewise
linear approximations to given functions and apply them to transfer operators
arising from interval maps with the transfer operator acting on the Banach space of
Lipschitz continuous functions. The resulting scheme is shown to provide
convergent approximations to the invariant densities together with rigorous error bounds
in a topology stronger than pointwise convergence (that is, stronger than the usual
$L^1$-convergence obtained through the standard Ulam method).

As an example, the authors establish rigorous error bounds for their
approximation scheme when applied to the full-branch map
$T\colon [0,1]\to[0,1]$ given by
\[
T(x)=
\begin{cases}
\frac{11x}{1-x} & 0 \leq x < \frac{1}{12}, \\
12x - i & \frac{i}{12} < x \leq \frac{i+1}{12},
\end{cases}
\]
where $i=1,\ldots, 11$.
Using the notation from Section \ref{sec:trans_ellipse} we obtain a holomorphic
map-weight system on an ellipse $E_{\gamma,R} \supset [0,1]$ with $\gamma=(0,1)$
and suitable $R>1$ (in fact, any $R \in (10, 20)$ is suitable) given by a family
$\{\Phi_i\}_{i=0}^{11}$ with $\Phi_0(x) = \frac{x}{11+x}$ and $\Phi_i(x) = \frac{x+i}{12}$ for $i=1,\ldots, 11$,
and a family of weights $\{W_i\}_{i=0}^{11}$ given by $W_i(x) = \Phi'_i(x)$.
The associated transfer operator given by
\eqref{eq:tr} is well-defined and compact on $H^\infty(E_{\gamma,R})$
with a simple leading eigenvalue at $1$. As is known, the rate of correlation decay
is determined by the subleading eigenvalue of $\tr$, which can easily
be approximated using the Lagrange--Chebyshev approximation algorithm.

In \cite{BB}, the subleading eigenvalue of $\lambda_2$ of $\tr$ was numerically
observed to be simple, which by Corollary \ref{cor:conv}(ii) implies
\[|\lambda_2 - \lambda_{2,n}|
= O \left(\left(\frac{r}{R}\right)^n\right) \text{ as } n \to \infty,\]
where $\lambda_{2,n}$ is the simple subleading
eigenvalue of $\tr_n$ (for sufficiently large $n$).
Below we compute $\lambda_{2,n}$ for different values of $n$ illustrating the rapid
exponential convergence of the scheme.
\vspace{0.1cm}
\begin{center}
\begin{footnotesize}
\begin{tabular}{ |c|c| }
 \hline
  2 & 0.0{\textcolor{mygray}{899609091606775605271181343464894043253988825186233560706}} \\
 12 & 0.090076127005295577{\textcolor{mygray}{7611934889786172935065120132357444867624}} \\
 22 & 0.0900761270052955778472464929999485{\textcolor{mygray}{481037667345626173002032}} \\
 32 & 0.09007612700529557784724649299994856269438059903552{\textcolor{mygray}{32965894}} \\
 42 & 0.0900761270052955778472464929999485626943805990355246068579 \\
 \hline
\end{tabular}
\end{footnotesize}
\end{center}
\vspace{0.1cm}
In order to obtain a good contraction ratio, we
performed a numerical search in the collection of ellipses
$E_{\gamma,R}$ with $\gamma=(0,1)$ and $R \in (1.01, 40)$. For each such ellipse, we
numerically determined the smallest ellipse $E_{\gamma,r}$
containing $\bigcup_i \Phi_i(E_{\gamma,R})$.
The best contraction ratio found in this way turned out to be
$(r/R)\approx 0.225$, which occurred for $R \approx 16.99$.
Comparing the resulting upper bound
with the empirically observed convergence rate $|\lambda_{2,n} - \lambda_{2,n-1}|$,
see Figure~\ref{fig:bahsoun}, shows that the theoretical bound is rather conservative in this case.

\begin{figure}[ht!]
  \centering
  \includegraphics[width=0.65\textwidth]{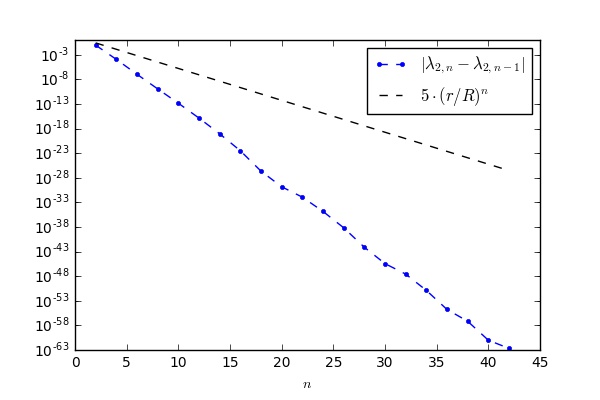}
  \caption{Absolute difference of subsequent approximations
  $\lambda_{2,n}$ as a function of $n$, computed using the Lagrange--Chebyshev
  algorithm (Algorithm $1$), compared to the upper bound given by the optimal
  contraction ratio in a family of confocal ellipses with foci $\gamma=(0,1)$.
  The scaling constant $5$ for the theoretical bound is chosen for visual
  reasons for both curves to intersect at $n=2$.}
  \label{fig:bahsoun}
\end{figure}

\subsection{Eigenvalues of transfer operators for circle maps}
In this subsection we shall approximate the spectrum of
transfer operators associated with expanding circle maps.
We shall choose Blaschke products as benchmark maps, as the corresponding
spectrum is available explicitly (see \cite{SBJNonlin, BJS}).
Using the same notation as in Section~\ref{trans_annulus},
let $\tau$ be a Blaschke product of degree two given by
\[\tau(z) = \left( \frac{z-\mu}{1-\bar{\mu} z} \right)^2, \quad |\mu| < 1. \]
For $|\mu| < 1/3$, the map $\tau$ yields an expanding circle map. We shall
approximate the spectrum of the transfer operator $\tr_\T$ in \eqref{eq:L_circle} with
weight function given by\footnote{The leading eigenfunctional of this transfer operator yields the measure of maximal entropy for $\tau$.} $w(z) = 1/\tau'(z)$. Using results from \cite{BJS}, it is not
difficult to see that the spectrum of $\tr_\T$ on $H^\infty(A_R)$ for a suitable annulus $A_R$ is given by
\[\operatorname{spec}(\tr_\T) = \{0, 2\} \cup
\{\left(\tau'(z_0)\right)^j,
\overline{\left(\tau'(z_0)\right)^j}\}_{j\in\N},\]
where $z_0$ is the unique attracting fixed point of $\tau$ in the open unit disk
$\mathbb{D}$.

For our numerical experiments we use $\mu = 0.33 \exp(i\pi/2)$ and
seek to approximate one of the subleading eigenvalues. Ordering the eigenvalues by decreasing modulus
\[ \lambda_1 = 2,\, \lambda_2 = \tau'(z_0),\, \lambda_3 = \overline{\tau'(z_0)},\,
\lambda_4 = \left(\tau'(z_0)\right)^2,\,\lambda_5 = \overline{\left(\tau'(z_0)\right)}^2, \,\ldots ,\]
we shall focus on the $7$-th eigenvalue in this sequence.
The fixed point $z_0$ of $\tau$ as well as $\tau'(z_0)$ are
available explicitly, however, as the expression is rather cumbersome,
we shall only give the numerical value of $\lambda_7$ using the first $60$ digits
\begin{align*}
\Re{(\lambda_7)} =\quad&
0.092670812973910200449109943460780953297549127162781618050227\ldots\\
\Im{(\lambda_7)} =-
&0.142165954484616119517212417389833629637614779964273017031665\ldots.
\end{align*}

Employing the equidistant Lagrange interpolation algorithm to calculate matrix
representations of $Q_{2n} \tr_\T Q_{2n}$ we obtain the following approximations of the
$7$-th eigenvalue of $\tr_\T$ for increasing values of $n$.
\vspace{0.1cm}

\begin{center}
\begin{footnotesize}
\begin{tabular}{ |c|c|c| }
 \hline
  {$n$}& $\Re{(\lambda_{7,n})}$ & $-\Im{(\lambda_{7,n})}$ \\
  23&
  0.0926708129739{\textcolor{mygray}{0991542848291151579381223686693}} & 0.142165954484616{\textcolor{mygray}{3819762201567962912837491514}} \\

  33&
  0.0926708129739102004491{\textcolor{mygray}{1051451873625154427548}} & 0.14216595448461611951721{\textcolor{mygray}{36940142097007716003}} \\

  43&
  0.0926708129739102004491099434607809{\textcolor{mygray}{3977781410}} & 0.1421659544846161195172124173898336{\textcolor{mygray}{025839907}} \\

  53&
  0.0926708129739102004491099434607809532975491{\textcolor{mygray}{0}} & 0.142165954484616119517212417389833629637614{\textcolor{mygray}{8}} \\
 \hline
\end{tabular}
\end{footnotesize}
\end{center}
\vspace{0.1cm}

Inspection of the table above shows that, for $n=53$,
the first $42$ decimal places of the approximate eigenvalue $\lambda_{7,n}$
coincide with the theoretical value $\lambda_7$. As a result, we see
that subleading complex eigenvalues close to $0$ of general transfer operators
can be approximated effectively using the equidistant Lagrange interpolation algorithm.

\subsection{Lyapunov exponents of random matrix products}
In this subsection we shall illustrate the use of the Lagrange--Chebyshev algorithm to
approximate Lyapunov exponents of random products of matrices. This is a rather
challenging task, in general. However, if all matrices are positive, it is possible to
use thermodynamic formalism to obtain the top Lyapunov exponent from a certain family
of transfer operators through periodic orbit expansions. This approach is originally due to
Pollicott~\cite{P2010}; more recently, an effective version has been proposed by Jurga
and Morris~\cite{JM}.
In the following we shall show how, instead of periodic orbit expansions,
the Lagrange--Chebyshev algorithm can be used effectively in this setup.

Let $\mathcal{A} = \{A_1, \ldots, A_K\}$ be a finite set
of positive invertible $2\times2$ matrices, let $(p_1,\ldots,p_K)$ be a
probability vector and denote by $\mathbb{P}_{p}$ the associated Bernoulli
measure on the space of sequences $\Omega = \{1, \ldots, K\}^{\mathbb{N}}$. The
(top) Lyapunov exponent of $\mathbb{P}_{p}$ is given by
\[\Lambda = \Lambda(\mathcal{A}, p) = \lim_{n\to \infty} \frac{1}{n}
 \int \log \| A_{\omega_1} \cdots A_{\omega_n}\|\, d\mathbb{P}_{p}(\omega),\]
 where $\omega\in\Omega$, and $\|\cdot\|$ denotes any matrix norm. Moreover, by \cite{FK} for $\mathbb{P}_{p}$-a.e.
 $\omega = (\omega_n)_{n\in\mathbb{N}}$ we have
\[\Lambda = \lim_{n\to \infty} \frac{1}{n} \log \| A_{\omega_1} \cdots A_{\omega_n}\|.\]

As was shown in \cite{P2010} and \cite{JM}, the Lyapunov exponent $\Lambda$ can be
expressed as the derivative of the top eigenvalue of $\tr_t$ with
respect to $t$ at $t=0$, where $\{\mathcal{L}_t\}_{t\in\mathbb{C}}$ is a certain
one-parameter family of transfer operators obtained as follows. To each matrix
\[ A = \begin{pmatrix}
a & b \\
c & d
\end{pmatrix}\in \mathcal{A}
\]
we associate a Moebius map ${\phi}_A$ given by
\[
{\phi}_A(z) = \frac{(a-b)z+b}{{w}_A(z)}
\]
 with ${w_A}(z) =(a+c-b-d)z +b+d$.
Writing $D_\rho(z_0)$ for a disk in $\mathbb{C}$ with radius $\rho$ and centre $z_0$, it
is not difficult to see that $\phi_A$ is a holomorphic self map of
$D_{\frac{1}{2}}(\frac{1}{2})$ with
$\phi_A(D_{\frac{1}{2}}(\frac{1}{2})) \subset D_\rho(\frac{1}{2})$
for suitable $\rho < 1/2$.
Moreover, one can show
that $\phi_A(E_{\gamma,R}) \subset E_{\gamma,r}$ for a suitable $1<r<R$ and
$\gamma = (\gamma_+, \gamma_-)$, with $\gamma_+, \gamma_-$ small (distinct)
perturbations of $1/2$.
We can now define a (so-called annealed) transfer operator $\tr$ on
$H^\infty(E_{\gamma,R})$ associated to $\mathcal{A}$ and
$p$ by
\[\tr_0 f = \sum_{i=1}^K p_i  f\circ \phi_{A_i}.\]
This operator is compact with leading eigenvalue $\lambda_0 = 1$, which turns out to be
simple \cite[Proposition 2.3]{JM} and the corresponding eigenfunction is the constant
function $h_0 = \mathbf{1}$.
We now define a family of perturbed transfer operators
\[\tr_t f = \sum_{i=1}^K p_i (w_{A_i})^{t} f\circ \phi_{A_i}
\qquad (t \in \mathbb{C}),\]
which are easily seen to be well-defined operators on $H^\infty(E_{\gamma,R})$.
This follows by observing that each weight function
$(w_{A_i})^t = \exp(t \log w_{A_i})$,
where $\log$ denotes the principal branch of the complex logarithm,
is a bounded holomorphic function
on $E_{\gamma,R}$ as $\Re{(w_{A_i}(z)}) > 0$ for $z\in E_{\gamma,R}$.
Since $t\mapsto \tr_t$ is a holomorphic family in $t$ and $\lambda_0$ is an
algebraically simple isolated eigenvalue of $\tr_0$, it follows by standard analytic
perturbation theory
(see, for example, \cite[Chapter~II, \S~1.8]{Kato})
that the largest (in modulus) eigenvalue
$\lambda_t$ of $\tr_t$ as well as the corresponding spectral projection
$\mathcal{P}_t$ are holomorphic in $t$ on an open neighbourhood around $t=0$.
We write $\mathcal{P}_tf=h^\ast_t(f)h_t$, where $h_t \in H^\infty(E_{\gamma,R})$ is the
eigenvector of $\tr_t$ corresponding to $\lambda_t$ and
$h_t^\ast \in H^\infty(E_{\gamma,R})^\ast$
the corresponding eigenfunctional, normalised so that $h_t^\ast(\mathbf{1})=1$.
By \cite[Proposition 3.1]{JM}, it follows that the Lyapunov exponent can
be expressed as the derivative of the top eigenvalue $\lambda_t$ at $t=0$, that is,
\[\Lambda = \frac{\partial\lambda_t}{\partial t}\Bigr\rvert_{t=0}.\]
As $\tr_t$, $h_t$ and $h^\ast_t$ are
holomorphic families in $t$ on a neighbourhood around $t=0$, a standard
computation (see, for example, \cite[Chapter~II, \S~2.2, Remark~2.2]{Kato}),
shows that
\[\Lambda =
h^\ast_0 ( \mathcal{M}_0 h_0), \]
where $\mathcal{M}_0$ denotes the
derivative of $\tr_t$ at $t=0$, which is given by
\[ \mathcal{M}_0f = \sum_{i=1}^K p_i \log(w_{A_i}) f\circ \phi_{A_i}. \]
The operator $\mathcal{M}_0$ is again a
well-defined operator on $H^\infty(E_{\gamma, R})$,
by the same arguments as before.

We shall now approximate the
Lyapunov exponent $\Lambda = h^\ast_0(\mathcal{M}_0 h_0)$
using Lagrange--Chebyshev approximation. For
$n\in\mathbb{N}$ let $\lambda_{0,n}$ denote the leading eigenvalue
of $\tr_{0,n} = \tr_0 P_{\gamma,n}$ and $h_{0,n}$ the corresponding leading
eigenfunction. Using (discrete) orthogonality properties  of Chebyshev polynomials
it is not difficult to see that
$\lambda_{0,n}=\lambda_0=1$ with $h_{0, n} = h_0 = \mathbf{1}$. The eigenfunctional
$h^\ast_{0,n}$ of $\tr_{0,n}$ corresponding to $\lambda_{0,n}=1$, normalised so that
$h^\ast_{0,n}(\mathbf{1})=1$, can be obtained numerically, as described in the opening
paragraphs of this section, and yields an approximation
\[ \Lambda_n=h^\ast_{0,n}(\mathcal{M}_0\mathbf{1}) \]
for the Lyapunov exponent $\Lambda$. In order to estimate the speed of convergence,
we note that
\[ \mathcal{P}_0^\ast h^\ast_{0,n}=h^\ast_0, \]
where $\mathcal{P}_0$ denotes the adjoint of the spectral projection $\mathcal{P}_0$ of
$\tr_0$ associated with $\lambda_0=1$. In order to see this observe that for any
$f\in H^\infty(E_{\gamma,R})$ we have
\[\mathcal{P}_0^\ast h^\ast_{0,n}(f)= h^\ast_{0,n}(\mathcal{P}_0f)=h^\ast_0(f)h^\ast_{0,n}
(\mathbf{1}) = h^\ast_0(f). \]

Thus, we have
\begin{align*}
|\Lambda- \Lambda_n|
= | h^\ast_0(\mathcal{M}_0\mathbf{1})-h^\ast_{0,n}(\mathcal{M}_0\mathbf{1}) |
&= |(\mathcal{P}_0^\ast h^\ast_{0,n}-h^\ast_{0,n})(\mathcal{M}_0\mathbf{1}) |
\\
&\leq \| \mathcal{P}_0^\ast h^\ast_{0,n}-h^\ast_{0,n} \|_{H^\infty(E_{\gamma,R})^\ast}
\| \mathcal{M}_0 \mathbf{1}\|_{H^\infty(E_{\gamma,R})},
\end{align*}
and so, by Corollary~\ref{cor:conv2},
\[|\Lambda - \Lambda_n|  = O \left(\left(\frac{r}{R}\right)^n\right).\]
Thus, the approximations converge exponentially in $n$, the size of the matrix
representing the Lagrange--Chebyshev approximant,
with the speed determined by the parameters of the ellipses $E_{\gamma,R}$ and
$E_{\gamma,r}$ satisfying
\[ \bigcup_i\phi_{A_i}(E_{\gamma,R} )\subset E_{\gamma,r}, \]
that is, complex contraction properties of the maps $\phi_{A_i}$.

We shall now test the performance of our algorithm using examples from \cite{JM}.
\begin{enumerate}
\item
  In Example 5.1 of \cite{JM},
  the matrices and probability vector are chosen to be
  \[ \mathcal{A} = \left \{
  \begin{pmatrix}
  2 & 1 \\
  1 & 1
  \end{pmatrix},
  \begin{pmatrix}
  3 & 1 \\
  2 & 1
  \end{pmatrix}
  \right \}
  \text{ and }
 p=(\frac{1}{2}, \frac{1}{2}).\]
 In this setting, the matrices
  in $\mathcal{A}$ strongly contract the positive quadrant, resulting in
  a highly effective approximation involving periodic orbits up to order $9$
  justifiably accurate to $31$ decimal places.
   Using our Lagrange--Chebyshev algorithm with $n=65$, we obtain the same
  reported value of
    \[\Lambda_n = 1.1433110351029492458432518536555882994025.\]
\item In Example 5.2 of \cite{JM}, the following choices are made
\[ \mathcal{A} =
\left \{
  \begin{pmatrix}
  3 & 1 \\
  1 & 3
  \end{pmatrix},
  \begin{pmatrix}
  5 & 2 \\
  2 & 5
  \end{pmatrix}
  \right \} \text{ and }
  p=(\frac{1}{2}, \frac{1}{2}).
  \]
 Note that in this case, we have
  $a_i + c_i = b_i + d_i$  so the weights $w_{A_i}$ are
  constant functions for $i=1,2$ hence $\mathcal{M}_0 \mathbf{1} = \mathbf{1}$,
  and therefore $\Lambda = \sum_{i=1}^{K}  p_i \log{(b_i + d_i)}$,
  which is the first entry of the matrix representation of $\tr_n$ for any $n$.
  In this case, the Lagrange--Chebyshev algorithm immediately yields the correct
  value, whereas the periodic orbit method from \cite{JM} only converges moderately  fast.
\item Perturbing the matrix entries of the previous trivial example, we now choose
\[ \mathcal{A} =
\left \{
  \begin{pmatrix}
  3.1 & 1 \\
  1 & 3
  \end{pmatrix},
  \begin{pmatrix}
  5.1 & 2 \\
  2 & 5
  \end{pmatrix}
\right \}
\text{ and }
  p=(\frac{1}{2}, \frac{1}{2}).
 \]
Applying the Lagrange--Chebyshev algorithm, we obtain very fast convergence in $n$,
as seen below
\begin{center}
\begin{footnotesize}
\begin{tabular}{ |c|c| }
 \hline
  1 & 1.67{\textcolor{mygray}{58722489713125213476722319655891806832879458231178998798}} \\
 10 & 1.67605018765901833052{\textcolor{mygray}{67823917604529846088532886001861874342}} \\
 20 & 1.6760501876590183305298001875390234510473{\textcolor{mygray}{825204275981348467}} \\
 30 & 1.6760501876590183305298001875390234510473137713601065642298 \\
 40 & 1.6760501876590183305298001875390234510473137713601065642298 \\
 \hline
\end{tabular}
\end{footnotesize}
\end{center}
\vspace{0.1cm}
In this case, the assumption $\Re{(w_{A_i}(z))} > 0$ for $i=1,2$ is satisfied
for $z$ with $\Re{(z)} \geq -40$, which in particular guarantees that
the weights $(w_{A_i})^t$ are holomorphic on $E_{\gamma,R}$ for $\gamma=(0,1)$
and $1 < R < \exp{(\text{arccosh}(40))} \approx 80$. Numerically, we may also
verify that this choice of ellipses satisfies
$\bigcup_i \phi_{A_i} (E_{\gamma,R}) \subset E_{\gamma,r}$ with $r < R$, yielding an
optimal contraction ratio of $(r/R) \approx 0.53$
for $R\approx 9.53$.
\end{enumerate}

\subsection{Approximation of stationary probability measures for iterated
function systems}
In \cite{CJ09}, Cipriano and Jurga study approximations of integrals with respect
to stationary probability measures associated to iterated function systems
on the interval, using an idea going back to Jenkinson and Pollicott \cite{JPSteklov}.
Their setting is an iterated function system $\{\Phi_i\}_{i = 1}^{K}$ consisting
of Lipschitz contractions $\Phi_i\colon [0,1] \to [0,1]$. Given
a probability vector $p = (p_1,\ldots, p_K)$, there exists a unique probability measure
$\nu$ such that
\[\int f\, d\nu = \sum_{i=1}^{K} p_i \int_{[0,1]} f \circ \Phi_i\, d\nu\]
for every continuous function $f\colon[0,1]\to \mathbb{R}$, see \cite{Hutch}. Assuming additionally that each $\Phi_i$ extends holomorphically to a neighbourhood of
$[0,1]$ and satisfies assumption \eqref{eq:cont} for a suitable ellipse $E_{\gamma,R} \supset [0,1] $,
we shall again consider the (annealed) transfer operator
\[\tr f = \sum_{i=1}^{K} p_i (f \circ \Phi_i),\]
which is a well-defined and compact operator on $H^\infty(E_{\gamma,R})$.
It has a simple eigenvalue $1$ with eigenfunction ${\bf 1}$, and the stationary
measure $\nu$ turns out to be the eigenfunctional $h^\ast$ of $\tr$ corresponding to the
eigenvalue $1$, normalised so that $h^\ast(\mathbf{1})=1$, that is
\[ h^\ast(f)=\int_{[0,1]}f\,d\nu. \]
Using the Lagrange--Chebyshev algorithm, we can obtain effective
approximations of the stationary measure through
eigenfunctionals of the approximants $\tr_n = \tr P_{\gamma, n}$.
In order to see this, note that $1$ is a simple eigenvalue of
$\tr_n$ and $h_n = \mathbf{1}$ is the corresponding eigenfunction of $\tr_n$. The
corresponding eigenfunctional $h^\ast_n$
can be obtained from the matrix representation of $\tr_n$, as explained in the introduction
to this section.

In \cite{CJ09}, the authors consider various integrals with respect
to the stationary measure arising from different iterated function systems, including an
application to the calculation of Lyapunov exponents. Here,
the Lyapunov exponent of the iterated function system $(\Phi, p)$ with respect to the
stationary measure $\nu$ is given by
\[\Lambda = - \int_{[0,1]} \sum_{i=1}^{K} p_i \log |\Phi_i'(x)|\, d\nu(x).\]
Suppose now that there is an $r$ with $1<r<R$ such that
\[ \bigcup_i\Phi_i(E_{\gamma, R})\subset E_{\gamma,r}. \]
Suppose also that the function
$x\mapsto \sum_i p_i \log | \Phi'_i(x) |$ has a
holomorphic extension $g$, which is bounded and holomorphic on the
elliptic domain $E_{\gamma, R}$.
Using the approximate eigenfunctional $h^\ast_n$ given above, we
obtain an approximation $\Lambda_n$ to the Lyapunov exponent $\Lambda$ by setting
\[ \Lambda_n= h^\ast_n(g).\]
In order to estimate the speed of convergence of this approximation we proceed as in the
previous subsection.  Let $\mathcal{P}$ denote the spectral projection
of $\tr$ associated with the leading simple eigenvalue~$1$, that is
\[ \mathcal{P}f=h^\ast(f)\mathbf{1}.\]
As before, we have $\mathcal{P}^\ast h^\ast_n = h^\ast$, hence
\begin{equation*}
|\Lambda - \Lambda_n| =
|h^\ast(g) - h^\ast_n(g)|
=|\mathcal{P}^\ast h^\ast_n(g)- h^\ast_n(g)| \\
\leq \| \mathcal{P}^\ast h^\ast_n -h_n^\ast\|_{H^\infty(E_{\gamma,R})^\ast}
\|g\|_{H^\infty(E_{\gamma,R})},
\end{equation*}
and Corollary~\ref{cor:conv2} now
yields
\[ |\Lambda - \Lambda_n|
= O\left(\left(\frac{r}{R}\right)^n\right).\]
We shall now compare our results to Example 6.8 in \cite{CJ09}. Let
\[
\Phi_1(x) = \frac{1}{6}\sin{(\pi x/4)} + \frac{1}{4},\qquad
\Phi_2(x) = \frac{1}{3}\sin{(\pi x/4)} + \frac{2}{3}, \qquad
p =(\frac{1}{3}, \frac{2}{3}).
\]
We may now approximate the corresponding Lyapunov exponent by $\Lambda_n$
for sufficiently large $n$, for example by
\[\Lambda_{100} = \quad
\scriptstyle{1.736720814737319877193356690960513773360205906006376079918873624791932498455557168841091},\]
where the first $89$ digits coincide with the value reported in \cite{CJ09}, which was
computed using multipliers of periodic orbits up to period $18$.
The optimal contraction
ratio of ellipses was numerically found to be $(r/R) \approx 0.4138$ in this case.

\section{Acknowledgement}
We gratefully acknowledge the support for the research presented in this
article by the EPSRC grant EP/RO12008/1. OFB would like to thank D\'onal MacKernan
for a first introduction to Chebyshev polynomials and their use in the study of expanding
interval maps.

\end{document}